\documentclass{article}

\usepackage[T1]{fontenc}
\usepackage[utf8]{inputenc}
\usepackage{amsmath, amsfonts, amsthm, amssymb, mathrsfs}

\usepackage{mathtools}
\usepackage{microtype}

\usepackage{tikz}
\usetikzlibrary{arrows.meta}

\usepackage{float}
\usepackage{enumerate}
\usepackage{graphicx}

\usepackage[hidelinks]{hyperref}

\usepackage{multicol}

 \usepackage[all]{xy}

\usepackage{paralist}
\usepackage{geometry}
\def\Z{\mathbb{Z}}
\def\R{\mathbb{R}}
\def\C{\mathbb{C}}
\def\Q{\mathbb{Q}}

\def\N{\mathbb{N}}

\def\O{\mathcal{O}}

\newtheorem{theorem}{Theorem}[section]
\newtheorem{lemma}[theorem]{Lemma}
\newtheorem{proposition}[theorem]{Proposition}
\newtheorem{corollary}[theorem]{Corollary}

\theoremstyle{definition}
\newtheorem{problem}{Problem}

\newtheorem{remark}[theorem]{Remark}
\newtheorem{example}[theorem]{Example}

\newcommand{\Int}{\operatorname{Int}}

\def\geng#1{\langle #1 \rangle}

\def\id{\operatorname{id}}

\def\deg{\operatorname{deg}}

\def\Mod{\operatorname{Mod}}
\def\Sp{\operatorname{Sp}}
\def\QU{\operatorname{QU}}

\def\rad{\operatorname{rad}}
\newcommand{\AFT}{\operatorname{AFT}}

\def\lcm{\operatorname{lcm}}

\newcommand{\comment}[1]{}

\makeatletter
\def\@addpunct#1{%
    \relax\ifhmode
    \ifnum\spacefactor>\@m \else#1\fi
    \fi}
    \newcommand{\zz}[1]{}
\newcommand{\keywordsname}{$2020$ Mathematics Subject Classification}
\def\@setkeywords{%
    {\itshape \keywordsname.}\enspace \@keywords\@addpunct.}
\def\keywords#1{\def\@keywords{#1}}
\let\@keywords=\@empty
\g@addto@macro{\maketitle}{\begingroup%
    \let\@makefnmark\relax  \let\@thefnmark\relax%
    \ifx\@keywords\@mpty\else\@footnotetext{\@setkeywords}\fi%
    \endgroup}
\makeatother

\keywords{Primary: 57K20; Secondary: 37E30, 15B36.   \\
    \indent\indent{\itshape Key words and phrases}: mapping class group, Nielsen--Thurston classification, pseudo-Anosov map, cyclotomic polynomial, symplectic representation, Morse--Smale diffeomorphism.
\\\indent\indent TK is partially supported by NSF grant DMS-2349814. {\L}PM acknowledges support from the Initiative of Excellence -- Research University (ID-UB) programme of Adam Mickiewicz University, Poznań, for a research visit to the University of Virginia, during which part of this work was developed.
}

\date{}

\title{Cyclotomic polynomials and homological criteria for mapping class types}

\author{\sc Thomas Koberda and {\L}ukasz Patryk Michalak}

\date{\today}

\numberwithin{equation}{section}

\begin{document}

    \maketitle
    \begin{abstract}
    Let $S_g$ be a closed orientable surface and let
    $\Psi\colon \Mod(S_g)\to \Sp(2g,\Z)$ be the representation induced by the action on first homology. 
    We investigate the characteristic polynomials of integral symplectic matrices arising from mapping classes of algebraically finite type and give a complete characterization in the cyclotomic case: for $n\geq 3$, the polynomial $\varphi_n(x)$ is realized by a mapping class of algebraically finite type if and only if $n$ has at most two distinct prime divisors.
    Consequently, if $n$ is square-free and has at least three distinct prime divisors, then every mapping class with characteristic polynomial $\varphi_n(x)$ is pseudo-Anosov. This gives a cyclotomic complement to the Casson--Bleiler homological criterion and yields a complete criterion for a symplectic polynomial to be realized only by pseudo-Anosov mapping classes.
    \end{abstract}

\section{Introduction}

Let $S_g$ denote the closed orientable surface of genus $g$. In the discussion below, the main case of interest is $g\geq 2$; lower-complexity surfaces enter only in auxiliary constructions. The action of the mapping class group on first integral homology gives the classical symplectic representation
\[
\Psi\colon \Mod(S_g)\longrightarrow \Sp(2g,\Z),
\]
which is surjective. Thus every integral symplectic matrix is realized as the homological action of some mapping class. In fact, for $g\geq 2$, every integral symplectic matrix is realized by a pseudo-Anosov mapping class; see, for example, \cite{Baik}. The Nielsen--Thurston classification, however, separates mapping classes into periodic, reducible and pseudo-Anosov behavior, and the homological representation does not by itself remember this decomposition. A natural problem is therefore to understand which homological data force, or obstruct, the occurrence of pseudo-Anosov components.

One well-known answer is the Casson--Bleiler homological criterion, in the symplectically irreducible form used by Margalit and Spallone: if the characteristic polynomial of $\Psi(f)$ is symplectically irreducible, is not a polynomial in $x^k$ for any $k>1$, and has no roots of unity as roots, then $f$ is pseudo-Anosov; see \cite{Casson-Bleiler,Margalit-Spallone} and Theorem 14.5 in~\cite{Farb-Marg}. This criterion has been used and sharpened in several subsequent constructions of pseudo-Anosov mapping classes with prescribed homological behavior, for instance in \cite{Margalit-Spallone}, cf.~\cite{Koberda,Malestein}. The cyclotomic case is the complementary case in which all homological eigenvalues are roots of unity, and it is orthogonal to the ``no roots of unity'' hypothesis in the Casson--Bleiler criterion.

The purpose of this paper is to analyze this cyclotomic case for mapping classes of algebraically finite type; we write $\AFT(S)$ for the set of such mapping classes on a surface $S$. Following Nielsen \cite{Nielsen1944}, a mapping class is of \emph{algebraically finite type} if it is represented by a homeomorphism which, after cutting along an invariant system of essential curves, has periodic first-return maps on the complementary components. Equivalently, its Nielsen--Thurston decomposition has no pseudo-Anosov pieces. Such mapping classes occur naturally in the study of quasi-unipotent surface homeomorphisms and of Morse--Smale isotopy classes on surfaces; see, for example, \cite{Rocha,GMM,GMMM}. By a theorem of da Rocha \cite{Rocha}, a mapping class contains a Morse--Smale diffeomorphism if and only if it is of algebraically finite type. On the other hand, the Lefschetz $\zeta$-function, together with related invariants such as Dold coefficients and algebraic periods, provides substantial information about the periodic structure of the dynamics of such diffeomorphisms, and is determined by the action induced on homology; see \cite{Dold, GMM,GMMM}. Thus, describing the image $\Psi(\AFT(S_g))$ is a central problem in the possible dynamics within the class of Morse--Smale diffeomorphisms.

Since the induced action of an algebraically finite type mapping class on homology is quasi-unipotent, one is led to the following problem:

\begin{problem}\label{problem:image_AFT}
Determine the image of the restriction
\[
\Psi|_{\AFT(S_g)}\colon \AFT(S_g)\longrightarrow \QU(2g,\Z)\cap \Sp(2g,\Z),
\]
where $\QU(2g,\Z)$ denotes the set of integral matrices all of whose eigenvalues are roots of unity.
\end{problem}

We focus on the induced characteristic polynomials $\chi_f$ coming from the actions of mapping classes $f$ on the homology of the underlying surface, instead of on the actual linear maps that occur. Our first main tool is a modern proof of Nielsen's formula for algebraically finite type mapping classes. In the periodic case, Nielsen expresses the characteristic polynomial in terms of the quotient orbifold data of the cyclic action. In the reducible algebraically finite case, the formula also includes contributions from the orbits of complementary components and from amphidrome annuli, namely annuli in the reduction system whose boundary components are exchanged by the first-return map. This will be Theorem~\ref{theorem:Nielsen_formula} below.

The principal application is a classification of cyclotomic characteristic polynomials arising from algebraically finite type mapping classes, and thus resolving Problem~\ref{problem:image_AFT} at least on the level of cyclotomic characteristic polynomials:

\begin{theorem}\label{theorem:intro_cyclotomic_classification}
Let $n\geq 3$. The cyclotomic polynomial $\varphi_n(x)$ is realizable as the characteristic polynomial of the homological action of an algebraically finite type mapping class if and only if $n$ has at most two distinct prime divisors.
\end{theorem}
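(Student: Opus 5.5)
The plan is to derive both directions from Nielsen's formula (Theorem~\ref{theorem:Nielsen_formula}). That formula expresses $\chi_f$ for $f\in\AFT(S_g)$ as a product/quotient of polynomials of the form $x^m-1$ and $x^m+1$. These factors come from three sources:
\begin{itemize}
\item[(i)] the quotient orbifold data of each periodic piece: genus, cone orders, and the number and periods of boundary components;
\item[(ii)] the permutation of the orbits of complementary components;
\item[(iii)] the amphidrome annuli, which contribute factors like $x^m+1$.
\end{itemize}
The basic invariant is the multiplicative structure: since $x^m-1=\prod_{d\mid m}\varphi_d$ and $x^m+1=(x^{2m}-1)/(x^m-1)$, any such product can be rewritten as $\prod_d \varphi_d^{e_d}$ with integer exponents determined by Möbius inversion. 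We need the sum to collapse to the single term $\varphi_n$ with $e_n=1$.

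\textbf{Sufficiency.} Take $n=p^aq^b$; the prime-power case is the degenerate version. The step I would carry out first is an explicit construction of a periodic map realizing $\varphi_n$. The classical identity
\[
\varphi_{p^aq^b}(x)=\frac{(x^{n}-1)(x^{n/(pq)}-1)}{(x^{n/p}-1)(x^{n/q}-1)}
\]
is exactly the shape Nielsen's formula gives for a cyclic $\Z/n$-action on $S^2$-quotient orbifold data with three cone points. Concretely, I would use cone orders $n$, $n/p$, $n/q$ (suitably adjusted, with rotation data chosen so the Riemann--Hurwitz and holonomy conditions hold) or, if needed, $n, p^a, q^b$.

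Recall that for a $\Z/n$ action with quotient sphere and cone points of orders $m_1,\dots,m_r$, Nielsen/Chevalley--Weil gives
\[
\chi_f(x)=(x-1)^2\,(x^n-1)^{r-2}\prod_i (x^{n/m_i}-1)^{-1}\cdot(\text{correction}),
\]
and one matches this to the identity above. If a purely periodic example fails for small cases (e.g.\ $n$ twice an odd prime power, or $n=6$), I would supplement with reducible examples using amphidrome annuli. Their $x^m+1$ factors supply the needed sign twists, using $\varphi_{2m}(x)=\varphi_m(-x)$ for $m$ odd.

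\textbf{Necessity.} This is the main obstacle. Suppose $n$ has at least three distinct primes $p,q,r$ and $\chi_f=\varphi_n$. Since $\varphi_n$ is irreducible, symplectic irreducibility forces every reducing curve to be homologically trivial or to contribute nothing. I would argue from Nielsen's formula that, after cancellation, the exponent vector $(e_d)$ is a $\Z$-combination of Möbius-type vectors $\mathbf 1_{d\mid m}$ coming from each orbit, subject to positivity constraints. These constraints come from geometry: the multiplicities of $x^{m}-1$ for the full orbits appear with coefficient $2g_i-2+(\text{boundary/cone count})$, which is positive in hyperbolic pieces. Each cone point of order $k$ in an orbit of period $m$ subtracts a single $x^{m/k}$-type term.

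The key counting step is to show that producing $\varphi_n$ with $\omega(n)\ge3$ requires inclusion--exclusion over $2^{\omega(n)}$ divisors $n/\prod_{S}p$, with signs $(-1)^{|S|}$. The available negative terms are only the cone/boundary terms, each of the form $-(x^{m/k}-1)$ with one term per cone point. Meanwhile the positive term from an orbit is weighted by $(r-2)$ where $r$ is the number of cone points. Comparing the coefficient of $x^{n/(pqr)}$-level divisors, or evaluating degrees/values at $x=1$ and using $\varphi_n(1)=1$ together with the Riemann--Hurwitz equality, should give a contradiction. The positive terms at level $|S|=2$ must be paid for by cone points, which in turn forces too many negative terms at level $|S|=1$ and $3$.

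I would first handle the periodic case, which reduces to the inequality that a three-cone-point sphere cannot realize more than two primes. Then I would reduce the general AFT case to it by showing that only one piece can carry $\varphi_n$ and the others contribute trivially. Amphidrome annuli would be handled via the substitution $x\mapsto -x$.
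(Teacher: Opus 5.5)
Your proposal has a genuine gap in each direction. The sufficiency construction cannot work for most two-prime $n$, and the necessity argument is missing both a structural lemma and the decisive count.

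\textbf{Sufficiency.} For $n=p^aq^b$ with $a,b\geq 1$ and $(a,b)\neq(1,1)$ there is \emph{no} periodic mapping class with $\chi_f=\varphi_n$, whatever orbifold data you choose.
\begin{itemize}
\item In \eqref{formula:Nielsen_periodic}, the only indices $d$ at which $x^d-1$ can carry a positive exponent are $d=1$ and $d=N$, the order of $f$. Each branch point contributes a denominator factor $x^{m_i}-1$ with $m_i<N$.
\item By uniqueness of the exponents in $\prod_d(x^d-1)^{a_d}$, the identity you quote has positive exponents at the two distinct indices $n$ and $n/(pq)$. Both are larger than $1$ unless $n=pq$.
\item Hence $\varphi_{12},\varphi_{18},\varphi_{45},\dots$ are never realized periodically. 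For $n=12$, your cone orders $12,4,3$ give $\varphi_6\varphi_{12}$, and $12,6,4$ give $\varphi_4\varphi_6\varphi_{12}$. These are not small exceptions but every non-square-free two-prime $n$; conversely $n=6$, which you list as a possible failure, is periodic.
\item The amphidrome fallback does not repair this. Swapping two halves across an amphidrome annulus produces the substitution $x\mapsto x^2$ (as in Example~\ref{example:genus_2}), not $x\mapsto -x$. An odd case like $\varphi_{45}(x)=\varphi_{15}(x^3)$ is not a polynomial in $x^2$ at all.
\end{itemize}
The missing idea is the cyclic substitution construction of Lemma~\ref{lemma:cyclic_substitution_construction}. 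Glue $m$ copies of a one-boundary surface carrying a periodic $h$ to $S_{0,m}$ and permute them cyclically with first-return map $h$; block-cyclicity then gives $\chi_F(x)=\chi_h(x^m)$. Combined with $\varphi_n(x)=\varphi_{\rad(n)}(x^{n/\rad(n)})$, and $\varphi_{2^a}(x)=\varphi_4(x^{2^{a-2}})$ since $\varphi_2$ is not symplectic, this reduces everything to periodic models of $\varphi_p$, $\varphi_4$, $\varphi_{pq}$ that have a fixed point. Your three-cone-point construction is the right one only for $n=pq$, with cone orders $pq,q,p$ as for the curve $y^{pq}=x^p(1-x)^q$.

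\textbf{Necessity.} Your plan stops before the decisive step.

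The periodic case needs neither the restriction to three-cone-point spheres (which is unjustified) nor Riemann--Hurwitz or evaluation at $x=1$; the realizable $\varphi_{pq}$ satisfies those constraints too. The argument is a count. If $p,q,r\mid n$, the M\"obius expansion of $\varphi_n$ has $a_d=1$ at the four distinct indices $n$, $n/(qr)$, $n/(pr)$, $n/(pq)$. A periodic class has positive exponents at most at $1$ and $N$.

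For the reducible case, ``only one piece carries $\varphi_n$'' must be replaced by a precise structural lemma, which is where symplectic irreducibility is really used:
\begin{itemize}
\item Every reduction curve is separating. An orbit of non-separating curves spans an invariant isotropic subspace, which forces a symplectic factorization of $\chi_f$.
\item The positive-genus complementary pieces form a single orbit.
\item The dual tree of a minimal invariant multicurve is then either one separating curve whose two sides are swapped, or a star with invariant centre $S_{0,k}$, $k\geq 3$, and no amphidrome annuli.
\end{itemize}
The paper then counts at most three positive indices in \eqref{formula:Nielsen_formula_branch_datum} for this configuration: $\{1,2n(\Sigma),2\}$ or $\{1,kn(\Sigma),n_0\}$. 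This contradicts the four indices above.

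The first-return maps on the pieces of such a minimal system need not be periodic. So the most robust way to finish is your own reduction made precise as an induction on genus:
\begin{itemize}
\item Block-cyclicity gives $\varphi_n(x)=r(x^k)$, where $r$ is the characteristic polynomial of the capped first-return map.
\item That map is an $\AFT$ class on a closed surface of genus $g/k$.
\item Irreducibility and degree counting force $k\mid n$ and $r=\varphi_{n/k}$ with $\rad(n/k)=\rad(n)$.
\item The periodic case then serves as the base of the induction.
\end{itemize}
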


The "if" direction is constructive. Prime-power and two-prime cases are obtained from periodic models realizing $\varphi_p$, $\varphi_4$ and $\varphi_{pq}$, followed by a cyclic permutation construction which replaces $x$ by a power $x^k$. The "only if" direction uses Nielsen's formula and the uniqueness of the expression of a cyclotomic polynomial as a rational product of factors of the form $x^d-1$. If $n$ has at least three distinct prime divisors, then this expression has too many numerator factors to be compatible with the Nielsen formula for algebraically finite type mapping classes.

Combining this classification with the Casson--Bleiler criterion gives a complete homological criterion for a symplectic polynomial to be realized only by pseudo-Anosov mapping classes. We prove the following:

\begin{theorem}\label{theorem:intro_complete_criterion}
	Let $p(x)\in\Z[x]$ be a symplectic polynomial of degree $2g$. Then
	$$
	\text{ $p(x)$ is realizable as $\chi_f(x)$ only for pseudo-Anosov mapping classes $f\in\Mod(S_g)$ }
	$$
	if and only if $p(x)$ satisfies each of the following conditions:
	\begin{itemize}
		\item[(a)] it is symplectically irreducible over $\Z$,
		\item[(b)] it is not a polynomial in $x^k$ for any $k > 1$;
		\item[(c)] it is not a cyclotomic polynomial, or $p(x) = \varphi_n(x)$ where $n$ has at least three distinct prime factors.
	\end{itemize}
\end{theorem}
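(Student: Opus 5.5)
The plan is to prove the two implications separately, using the Casson--Bleiler criterion, Theorem~\ref{theorem:intro_cyclotomic_classification}, and explicit constructions of non-pseudo-Anosov realizations.

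For sufficiency, assume (a), (b), (c) and let $f\in\Mod(S_g)$ satisfy $\chi_f=p$. We argue by cases on the cyclotomic content of $p$.
\begin{itemize}
\item[(i)] Suppose $p$ has no roots of unity as roots. Then (a), (b) and the Casson--Bleiler criterion (Theorem 14.5 in~\cite{Farb-Marg}) give that $f$ is pseudo-Anosov.
\item[(ii)] Suppose $p$ has some root of unity $\zeta$ as a root, say $\zeta$ a primitive $n$-th root. Then $\varphi_n\mid p$. Since $\varphi_n$ is itself symplectic (palindromic of even degree for $n\ge3$), symplectic irreducibility forces $p=\varphi_n$. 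The cases $n=1,2$ are excluded directly: $(x\mp1)^2$ or $\varphi_1\varphi_2$ can be symplectic factors only when $g=1$, and we handle them separately or exclude them by degree. By (c), $n$ then has at least three distinct prime factors. Theorem~\ref{theorem:intro_cyclotomic_classification} shows $f\notin\AFT(S_g)$, so $f$ has a pseudo-Anosov component in its Nielsen--Thurston decomposition.
\end{itemize}
The remaining gap in (ii) is to rule out a reducible $f$ with a pseudo-Anosov piece. If $f$ is reducible, the homology splits along a reduction system. Its characteristic polynomial factors into the contribution of the symplectic part coming from subsurfaces of lower genus times factors of the form coming from curves and permutations. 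I expect this to yield a nontrivial symplectic factorization or a polynomial in $x^k$, contradicting (a) or (b). This is the same argument that underlies the Margalit--Spallone form of the criterion. I would check that their proof never uses the "no roots of unity" hypothesis at this step, only to exclude periodicity. If it does use that hypothesis, I would instead apply the version of the Nielsen formula from Theorem~\ref{theorem:Nielsen_formula}, restricted to the reducible setting.

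For necessity, suppose one of (a)--(c) fails, and we produce a non-pseudo-Anosov $f$ with $\chi_f=p$.
\begin{itemize}
\item[(a) fails.] Write $p=p_1p_2$ with $p_i$ symplectic of degrees $2g_1,2g_2$. Realize each $p_i$ on $S_{g_i}$ by any mapping class fixing a disk, which is possible since $\Psi$ is surjective onto $\Sp(2g_i,\Z)$ with a point fixed. Take the connected sum along the disk. The resulting $f$ is reducible, with $\chi_f=p_1p_2$.
\item[(b) fails.] Write $p(x)=q(x^k)$ with $q$ symplectic of degree $2g/k$; this is the step requiring care. Then $k\mid g$ or we use the variant with $g-1$. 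We then use the cyclic permutation construction from the paper: permute $k$ copies of a subsurface cyclically, with the first-return map realizing $q$. This gives a reducible class with characteristic polynomial $q(x^k)$.
\item[(c) fails.] Here $p=\varphi_n$ with $n$ having at most two prime factors, and Theorem~\ref{theorem:intro_cyclotomic_classification} directly provides an AFT realization.
\end{itemize}

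The main obstacle is the genus bookkeeping in case (b), namely realizing $q(x^k)$ on $S_g$ when $k\nmid g$. There $q$ need not be symplectic, for example when it has odd degree. I would first try placing $k$ permuted copies of $S_{h,1}$ around a central sphere with $k$ holes, or around a genus-one piece when needed. A quotient by a free $\Z/k$ rotation of $S_g$ with $g-1=k(h-1)$ also gives an option, and the extra homology contributes a factor like $(x^k-1)$ that must be absorbed into $q$. I would verify that some combination of these covers all the cases where $p$ is a symplectic polynomial in $x^k$.
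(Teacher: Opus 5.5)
\textbf{The genuine gap is in the necessity direction, in the case where (b) fails.} You treat that case on its own and then get stuck on $k\nmid g$ and on odd $\deg q$. The fixes you float (a central genus-one piece, or a free $\Z/k$ quotient with an extra $(x^k-1)$ factor) are not worked out, and in general they would not produce $q(x^k)$.

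In fact, the failure of (b) alone does not guarantee a reducible realization. Take $x^2+1=r(x^2)$ with $r(y)=y+1$ on the torus. A reducible torus class fixes a primitive homology class up to sign, so it has trace $\pm2$, and $x^2+1$ has trace $0$.

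The missing idea is that the failures of (a) and of (c) are already handled. So in this case you may assume (a) and (c) hold, and then $\deg r$ is forced to be even:
\begin{itemize}
\item[(i)] $r$ is reciprocal because $p$ is.
\item[(ii)] Suppose $\deg r$ were odd. Then $k$ is even and $r(-1)=0$, so $r(y)=(y+1)s(y)$ with $s$ reciprocal, monic, of even degree and constant term $1$.
\item[(iii)] Hence $p(x)=(x^k+1)s(x^k)$ is a product of symplectic polynomials, and (a) forces $s=1$, i.e.\ $p=x^k+1$.
\item[(iv)] Symplectic irreducibility then forces $k$ to be a power of $2$; otherwise $x^k+1$ splits into at least two symplectic cyclotomic factors. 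So $p=\varphi_{2k}$, contradicting (c).
\end{itemize}
Thus $\deg r=2g'$, $g=kg'$, and $r$ is symplectic. Realize $r$ by some $h\in\Mod(S_{g',1})$; surjectivity onto $\Sp(2g',\Z)$ survives with one boundary component, since nonseparating twists map to transvections. Then apply Lemma~\ref{lemma:cyclic_substitution_construction} with $m=k$ and a central $S_{0,k}$. No genus bookkeeping is needed. This is exactly how the paper closes that case.

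\textbf{On sufficiency.} Your structure matches the paper's: Casson--Bleiler off the cyclotomic locus, and Theorem~\ref{theorem:intro_cyclotomic_classification} to exclude algebraically finite type on it. The step you call the remaining gap is, as written, only asserted, but it closes as you expect. It is the paper's lemma that a reducible $f$ with symplectically irreducible $\chi_f$ satisfies $\chi_f(x)=r(x^k)$ with $k>1$.

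For $p=\varphi_n$ you need not check where Margalit--Spallone use roots of unity, because $\varphi_n$ is irreducible over $\Q$. The argument runs as follows:
\begin{itemize}
\item[(i)] A nonseparating reduction curve would have an orbit spanning a nonzero isotropic, hence proper, $f_*$-invariant subspace. So every reduction curve separates.
\item[(ii)] The positive-genus complementary pieces form a single orbit, since otherwise an orbit sum is a proper invariant subspace.
\item[(iii)] That orbit has length $k\ge2$, because the dual tree has at least two leaves and all leaves have positive genus.
\item[(iv)] The block-cyclic formula gives $\varphi_n(x)=r(x^k)$, contradicting (b).
\end{itemize}
The paper proves the lemma for merely symplectically irreducible $\chi_f$, which costs it the extra $W\subset W^\perp$, $\chi=q^2\bar q$ argument. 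Plain irreducibility over $\Q$ avoids that argument here.
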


 In particular, if $n$ is square-free and has at least three distinct prime divisors, then every mapping class $f$ satisfying $\chi_f(x)=\varphi_n(x)$ is pseudo-Anosov. The first example of a cyclotomic homological action not realized by algebraically finite type mapping classes is $n=30$, which occurs in genus $4$.

 We include some remarks on mapping class dynamics that can be recovered from virtual homological data. Specifically, Hadari~\cite{Hadari} and Liu's~\cite{Liu} resolution of a question of McMullen~\cite{McMullen} (cf.~\cite{Koberda}) shows that the presence of pseudo-Anosov dynamics is detected virtually on homology: a mapping class has a pseudo-Anosov component if and only if some lift to a finite cover has homological spectral radius greater than one. Although the homological action on the base surface $S_g$ is generally not rich enough to decide this, it is natural to ask how much dynamical information homological actions retain and, in particular, how much the homological data differs between the pseudo-Anosov and algebraically finite type cases.

The paper is organized as follows. Section~2 fixes notation. Section~3 proves Nielsen's formula in both the periodic and algebraically finite cases. Section~4 applies the formula to cyclotomic polynomials and proves Theorem~\ref{theorem:intro_cyclotomic_classification}. Section~5 combines the cyclotomic classification with the Casson--Bleiler criterion to obtain the complete criterion for pseudo-Anosov-only characteristic polynomials. Section~6 discusses finite orders of homological actions arising from algebraically finite type mapping classes.

\section{Basic notions}

Throughout the paper, mapping classes are assumed to be orientation-preserving unless explicitly stated otherwise. All surfaces are compact and orientable, and are connected unless stated to the contrary.

Let
\[
\chi_f(x):=\chi_{f_{*}}(x)=\det(xI-f_{*})
\]
be the characteristic polynomial of the induced map
$f_{*}\colon H_1(S;\Z)\to H_1(S;\Z)$. The map $f$ induces maps on homology in all dimensions, and we will write $f_{*i}$ for the induced action on $H_i$ when confusion may occur otherwise.
Since surface homology is torsion-free, we may freely pass between integral and rational homology when discussing characteristic polynomials. If $S=S_g$ is closed, then the algebraic intersection form identifies $f_{*}$ with an element of $\Sp(2g,\Z)$.

A polynomial $p(x)\in\Z[x]$ of degree $2g$ is called \emph{symplectic} if it is monic, has constant term $1$, and is reciprocal, i.e.
\[
 x^{2g}p(x^{-1})=p(x).
\]
It is \emph{symplectically reducible} if it can be written as a product of two symplectic polynomials of positive degree, and \emph{symplectically irreducible} otherwise.

A finite collection $C=\{c_i\}_{i\in I}$ of pairwise disjoint, pairwise non-isotopic essential simple closed curves in $S$ is called a \emph{reduction system} for a mapping class $f\in\Mod(S)$ if it is invariant under some representative of $f$; equivalently, after choosing such a representative, the curves in $C$ are permuted by $f$. The collection is allowed to be empty.

A mapping class $f\in\Mod(S)$ is said to be of \emph{algebraically finite type} if it has a representative, again denoted by $f$, admitting a reduction system $C$ such that, on every orbit of connected components of $S\setminus \bigcup_i c_i$, the corresponding first-return map is periodic. Equivalently, $f$ has no pseudo-Anosov component in its Nielsen--Thurston decomposition. We write $\AFT(S)$ for the set of algebraically finite type mapping classes on $S$.

	\section{Nielsen's formula for algebraically finite mapping classes}

	We now revisit results of Nielsen \cite{Nielsen1937,Nielsen1944} which yield a formula for the characteristic polynomial of a mapping class of algebraically finite type, and we give a proof in modern language. 
	
	In this section, $S$ is a connected compact orientable surface, possibly with boundary. We mostly follow Nielsen's notation and write $\omega = 1$ if $\partial S = \varnothing$ and $\omega=0$ otherwise.
	
	First, let $f \in \Mod(S)$ be periodic of order $n = n(S)$, i.e. $f^n = \id_S$. Because it is a finite order orientation-preserving homeomorphism of $S$, the quotient map $q\colon S \to S/\geng{f} =: S'$ is a branched covering with $u = u(S)$ branch points $x_1,\ldots x_{u}$. Let $h = h(S)$ be the genus of $S'$, let $b=b(S')$ be the number of boundary components of $S'$ and let $m_i = m_{i}(S) := \#q^{-1}(x_i)$ be the number of points above $x_i$. 
	
	\begin{theorem}[Nielsen 1937 \cite{Nielsen1937}] \label{theorem:Nielsen_formula_periodic}
	With the notation above, if $f\in\Mod(S)$ is periodic, then the characteristic polynomial of the induced action on $H_1(S;\Z)$ is
	\begin{align}\label{formula:Nielsen_periodic}
	\chi_{f}(x) = (x-1)^{1+\omega}
	\frac{(x^{n}-1)^{2h+u+b-2}}{(x^{m_{1}}-1)(x^{m_2}-1)\cdots(x^{m_u}-1)}.
	\end{align}
\end{theorem}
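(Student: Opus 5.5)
Since $f$ has finite order $n$, the characteristic polynomial is determined by the Lefschetz numbers $L(f^k)$, $k=1,\dots,n$. I will compute these from the branched cover $q\colon S\to S'$ and show they match the right-hand side of \eqref{formula:Nielsen_periodic}. Concretely, $f_*$ on $H_1(S;\Q)$ is diagonalizable with eigenvalues $n$-th roots of unity. So $\chi_f$ is determined by the traces $\operatorname{tr}(f^k_{*1})$ for $k=1,\dots,n$, equivalently by the character of the $\Z/n$-representation $H_1(S;\Q)$.

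\textbf{Step 1: Lefschetz numbers.} By the Lefschetz fixed point theorem for the isometry-like periodic map (choose a conformal structure making $f$ holomorphic/isometric, so fixed points are isolated with index $+1$), we get
\[
L(f^k)=\#\mathrm{Fix}(f^k)\quad\text{for } f^k\neq \id .
\]
A point over $x_i$ has stabilizer of order $n/m_i$, so it is fixed by $f^k$ iff $m_i\mid k$. Hence $\#\mathrm{Fix}(f^k)=\sum_{i:\,m_i\mid k} m_i$. For $k=n$ we have $L=\chi(S)$. Riemann--Hurwitz gives
\[
\chi(S)=n(2-2h-b-u)+\sum_i m_i .
\]
Also $L(f^k)=\operatorname{tr}f^k_{*0}-\operatorname{tr}f^k_{*1}+\operatorname{tr}f^k_{*2}$, with the $H_0$ term equal to $1$ and the $H_2$ term equal to $\omega$, since $f$ preserves orientation. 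Thus
\[
\operatorname{tr}f^k_{*1}=1+\omega-\sum_{m_i\mid k}m_i \quad\text{for } n\nmid k,
\]
and $\operatorname{tr}f^n_{*1}=2g_S+b_S-1$, computed from $\chi(S)$.

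\textbf{Step 2: traces of the candidate.} For each factor $x^m-1$ with $m\mid n$, the companion action, namely the cyclic permutation of $m$ basis vectors, has trace of its $k$-th power equal to $m$ if $m\mid k$ and $0$ otherwise. The virtual representation
\[
(1+\omega)\cdot\mathbf 1+(2h+u+b-2)\,\Q[\Z/n]-\sum_i \Q[\Z/m_i]
\]
therefore has trace $1+\omega-\sum_{m_i\mid k}m_i$ at $f^k$ with $n\nmid k$. At $k=n$ it has trace
\[
1+\omega+n(2h+u+b-2)-\sum_i m_i=1+\omega-\chi(S),
\]
which agrees with $\operatorname{tr}\id=2-\chi(S)-(1-\omega)$. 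Two $\Q[\Z/n]$-modules with equal characters are isomorphic, so the characteristic polynomials agree, giving \eqref{formula:Nielsen_periodic}.

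\textbf{Main obstacle.} The delicate point is justifying that every isolated fixed point of $f^k$ has Lefschetz index exactly $+1$, including when $\partial S\neq\varnothing$. For the interior case I will use local linearization: a finite-order orientation-preserving map is locally conjugate to a rotation, which has index $1$. Fixed points on the boundary need separate care. If $f^k$ fixes a boundary point, then it fixes that whole boundary circle, since it acts there as a rotation of finite order. I will handle this by noting that such a boundary circle lies over a boundary component of $S'$ meeting no branching. Then $f^k$ fixes the corresponding component and acts freely elsewhere near it, so $f^k=\id$ by the local structure of finite-order maps. Alternatively, I can double $S$ along its boundary and apply the closed case.
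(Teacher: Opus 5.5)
Your argument is correct, but it takes a different route from the paper. The paper never computes fixed points or traces. It lifts a CW structure on $S'$ (with one $2$-cell around each branch point) to an $f$-invariant CW structure on $S$. It observes that $f$ permutes all cells in free orbits of length $n$, except for the $m_i$ two-cells over each $U_i$. It then uses multiplicativity of characteristic polynomials along the cellular chain complex, $\prod_i \chi_{f_{*i}}^{(-1)^i}=\prod_i \chi_{f_{\#i}}^{(-1)^i}$, and the formula drops out from $k_1-k_0-k_2=-\chi(S')$.

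You instead identify the class of $H_1(S;\Q)$ in the representation ring of $\Z/n$ by its character, computing $\operatorname{tr} f^k_{*1}$ for each $k$ via the Lefschetz fixed point theorem and local indices. The two are dual forms of the same identity. The paper's chain-level equality is the Hopf trace formula for all powers of $f$ at once, with the fixed $2$-cells over the $U_i$ playing the role of your fixed points.

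What each route buys:
\begin{itemize}
\item The paper's version needs no index theory, no local rotation model and no separate care at $\partial S$. Its multiplicativity device is also reused verbatim in the Mayer--Vietoris proof of Theorem~\ref{theorem:Nielsen_formula}.
\item Yours gives a transparent reading of the formula: the trace of $f^k$ counts the points in the orbits over the $x_i$ with $m_i\mid k$. It also avoids choosing cell structures.
\end{itemize}

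Two small remarks:
\begin{itemize}
\item The claim $\operatorname{tr} f^n_{*1}=2g_S+b_S-1$ is false for closed $S$; it should be $2g_S$. The value $1+\omega-\chi(S)$ that you actually use later is correct.
\item Your boundary discussion can be shortened. The map $q$ restricts to a connected regular covering $S\setminus q^{-1}(B)\to S'\setminus B$ with deck group $\langle f\rangle$. So no $f^k\neq\id$ fixes a point outside $q^{-1}(B)$, in particular none on $\partial S$. This is the same fact you already use when counting $\#\mathrm{Fix}(f^k)$.
\end{itemize}
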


	\begin{proof}
		If $B = \{x_1,\ldots,x_u\}$ consists of all the branch points of $q$, then $q\colon S\setminus q^{-1}(B) \to S' \setminus B$ is an $n$-sheeted covering map. Take small pairwise disjoint open neighbourhoods $U_i\ni x_i$, each homeomorphic to the interior of the two-dimensional disk $\Int D^2$, such that $q^{-1}(U_i)$ is a disjoint union of $m_i$ copies of disks $\Int D_i$. Choose a finite $2$-dimensional CW-structure on the compact surface $S'' := S'\setminus \bigcup_i U_i$. After attaching the $u$ two-cells corresponding to the $U_i$, we obtain a CW-structure on $S'$ having $k_i$ cells of dimension $i$, for $i=0,1,2$. We lift the cells over $S''$ to cells of $q^{-1}(S'')=S\setminus q^{-1}(\bigcup_i U_i)$, and for each $i$ we add the $m_i$ two-cells in $q^{-1}(U_i)$. This gives a CW-structure on $S$ for which $f$ acts cellularly. It has $nk_0$ $0$-cells, $nk_1$ $1$-cells, and $n(k_2-u)+\sum_{i=1}^u m_i$ $2$-cells.
		
		Let $(C_i(S))$ be the cellular chain complex for $S$ with the CW-structure obtained above, and let $f_{\#i}\colon C_i(S)\to C_i(S)$ be the map induced by $f$. Since the characteristic polynomial is multiplicative along exact sequences of free $\Z$-modules, it is not difficult to show that
		$$
		\prod_{i=0}^2 \left( \chi_{f_{*i}}(x)\right)^{(-1)^i} = 
		\prod_{i=0}^2 \left( \chi_{f_{\#i}}(x)\right)^{(-1)^i}.
		$$
		Clearly, $\chi_{f_{*0}}(x)=(x-1)$ and $\chi_{f_{*2}}(x) = (x-1)^\omega$. Since $f$ cyclically permutes all $0$- and $1$-cells in orbits of length $n$ and $f^n = \id_S$, $\chi_{f_{\#i}}(x) = (x^n-1)^{k_i}$ for $i=0,1$. Finally, in the same way $f$ cyclically permutes $n(k_2-u)$ $2$-cells in orbits of length $n$ that lie above $S''$, and above each $U_i$ it cyclically permutes $m_i$ $2$-cells in the orbit of length $m_i$. Therefore
		$$
		\chi_{f_{\#2}}(x) = (x^n-1)^{k_2-u} \prod_{i=1}^u (x^{m_i}-1),
		$$
		Combining these identities, we obtain
		$$
		\chi_f(x) = (x-1)^{1+\omega}
		\frac{(x^{n}-1)^{k_1-k_0-k_2+u}}{(x^{m_{1}}-1)(x^{m_2}-1)\cdots(x^{m_u}-1)}
		$$
		which is the desired formula, since $k_1-k_0-k_2 = - \chi(S') = 2h + b -2$.
	\end{proof}
	
	For brevity in the sequel, we introduce the notation
	$$
	\psi_f(x) := \frac{\chi_f(x)}{(x-1)^{1+\omega}} = \frac{(x^{n}-1)^{2h+u+b-2}}{(x^{m_{1}}-1)(x^{m_2}-1)\cdots(x^{m_u}-1)}.
	$$

	Note that in the above proof we used an easy fact we will also exploit later in more generality: If $\alpha \colon \bigoplus_{i=1}^k A_i \to \bigoplus_{i=1}^k A_i$ is a homomorphism of free abelian groups $A_i$ of the same rank such that $\alpha(A_i) \subset A_{i+1}$ (where here indices are taken modulo $k$), then the characteristic polynomial of $\alpha$ satisfies
	\begin{align}\label{formula:char_pol_for_block_cyclic_matrix}
		\chi_\alpha(x) = \chi_{\alpha^k|_{A_1}}(x^k),
	\end{align}
	a formula which follows from the block cyclic form of the matrix of $\alpha$.

	Now let $f\colon S\to S$ be a homeomorphism representing a mapping class of algebraically finite type, and let $C=\{c_i\}_{i\in I}$ be a (possibly empty) reduction system for $f$ such that the first-return map on each component of $S\setminus\bigcup_i c_i$ is periodic. Choose an invariant regular neighbourhood $N(C)=\bigcup_i N(c_i)$ of $\bigcup_i c_i$, with $N(c_i)\cong c_i\times[-1,1]$, so that $f(N(C))=N(C)$.
	
	We will use the following notation:
	\begin{itemize}
		\item $\O_{S\setminus C}$ --- a set of representatives of all orbits of the action of $f$ on the set of closures of all the connected components of $S \setminus C$.
		\item $\O_C$ --- a set of representatives of all orbits of the action of $f$ on $\{c_i\}_{i\in I}$.
		\item $\ell(\Sigma)$ for $\Sigma \in \O_{S \setminus C} \cup \O_C$ --- the positive length of the orbit of $\Sigma$, i.e. the minimum positive integer such that $f^{\ell(\Sigma)}(\Sigma) = \Sigma$. 
		\item $\O^A_C$ --- the set of $c \in \O_C$ such that $f^{\ell(c)}$ swaps boundary components of $N(c)$. Adopting Nielsen's terminology, such an annulus $N(c)$ is called an \emph{amphidrome annulus}.
		\item If $\Sigma \in \O_{S\setminus C}$ or $\Sigma = N(c)$ for $c\in \O^A_C$, then the first-return mapping class $f^{\ell(\Sigma)}|_\Sigma$ is periodic of order $n(\Sigma)$; that is, it has a representative $g_\Sigma$ such that $g_\Sigma^{n(\Sigma)}=\id_\Sigma$. Following earlier definitions, $\Sigma$ naturally leads to quantities $h(\Sigma)$, $u(\Sigma)$, $b(\Sigma)$ and $m_i(\Sigma)$ for $i=1,\ldots,u(\Sigma)$.
	\end{itemize}
	
	\begin{theorem}[Nielsen 1944 \cite{Nielsen1944}] \label{theorem:Nielsen_formula}
	With the notation above, if $f\in\Mod(S)$ is of algebraically finite type, then the characteristic polynomial of the induced action on $H_1(S;\Z)$ is
		\begin{align} \label{formula:Nielsen_formula_branch_datum}
			\chi_f(x)
			&=(x-1)^{1+\omega}
			\prod_{\Sigma\in\mathcal O_{S\setminus C} \cup \O^A_C}
			\frac{\bigl(x^{\,\ell(\Sigma)\,n(\Sigma)}-1\bigr)^{\,2h(\Sigma)+u(\Sigma)+b(\Sigma)-2}}
			{\displaystyle\prod_{i\in\{1,\ldots,u(\Sigma)\}}
				\bigl(x^{\,\ell(\Sigma)\,m_i(\Sigma)}-1\bigr)}
			\\\label{formula:Nielsen_formula_psi}
			&=(x-1)^{1+\omega}
			\prod_{\Sigma\in\mathcal O_{S\setminus C}}
			\psi_{(f^{\,\ell(\Sigma)})|_\Sigma}\!\bigl(x^{\,\ell(\Sigma)}\bigr)
			\prod_{c\in\mathcal O_C^{A}}
			\frac{x^{\ell(c)}+1}{x^{\ell(c)}-1}.
		\end{align}

	\end{theorem}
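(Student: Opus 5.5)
We would prove Theorem~\ref{theorem:Nielsen_formula} in the same way as Theorem~\ref{theorem:Nielsen_formula_periodic}: compare the Lefschetz-type alternating product of characteristic polynomials on homology with the one on a chain level. The chain level will be built from a decomposition of $S$ adapted to $f$, and the chain-level count will reduce to the periodic formula on each piece. Concretely, write $S = \bigl(\bigcup_{\Sigma} \overline{S\setminus N(C)}\text{-components}\bigr)\cup N(C)$, glued along $\partial N(C)$. Since all pieces are $f$-invariant as collections, we get an $f$-equivariant Mayer--Vietoris sequence of rational vector spaces
\[
\cdots\to H_i(\partial N(C))\to H_i(S\setminus \Int N(C))\oplus H_i(N(C))\to H_i(S)\to H_{i-1}(\partial N(C))\to\cdots .
\]
Multiplicativity of characteristic polynomials along exact sequences then gives
\[
\prod_i \chi_{f_{*i}}^{S}(x)^{(-1)^i}=\frac{\prod_i \chi^{S\setminus N(C)}_{f_{*i}}(x)^{(-1)^i}\,\prod_i\chi^{N(C)}_{f_{*i}}(x)^{(-1)^i}}{\prod_i\chi^{\partial N(C)}_{f_{*i}}(x)^{(-1)^i}} .
\]
The left side is $(x-1)\chi_f(x)^{-1}(x-1)^{\omega}$, so $\chi_f$ is determined by the three alternating products on the right.

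Next we evaluate each factor orbit by orbit. Every piece is a disjoint union of $f$-orbits of components, and formula \eqref{formula:char_pol_for_block_cyclic_matrix} turns the contribution of an orbit of length $\ell$ into the corresponding first-return contribution evaluated at $x^{\ell}$.
\begin{itemize}
\item For a complementary piece $\Sigma\in\O_{S\setminus C}$, the alternating product is $(x^{\ell}-1)\,\chi_{g_\Sigma}(x^{\ell})^{-1}$, since $\partial\Sigma\neq\varnothing$ and hence $H_2=0$. Here $\chi_{g_\Sigma}=(x-1)\psi_{g_\Sigma}$ by Theorem~\ref{theorem:Nielsen_formula_periodic} with $\omega=0$.
\item For an annulus $N(c)$, $c\in\O_C$, the first-return map acts on $H_0$ and $H_1$ by $1$ and $\pm1$, with $-1$ exactly in the amphidrome case. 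The contribution is therefore $1$ if $c\notin\O_C^A$, and $(x^{\ell}-1)/(x^{\ell}+1)$ if $c\in\O^A_C$.
\item For the two boundary circles of $N(c)$, the first return either fixes both with $H_1$ acting by $+1$, contributing $1$, or swaps them. In the swapped case formula \eqref{formula:char_pol_for_block_cyclic_matrix} on $H_0$ and $H_1$ gives $(x^{2\ell}-1)/(x^{2\ell}-1)=1$.
\end{itemize}
Hence the $\partial N(C)$ term is trivial, and collecting the factors gives
\[
\chi_f(x)=(x-1)^{1+\omega}\prod_{\Sigma\in\O_{S\setminus C}}\psi_{g_\Sigma}(x^{\ell(\Sigma)})\prod_{c\in\O_C^A}\frac{x^{\ell(c)}+1}{x^{\ell(c)}-1}.
\]
This is \eqref{formula:Nielsen_formula_psi}.

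To pass to \eqref{formula:Nielsen_formula_branch_datum}, we substitute the explicit form of $\psi$ into the product over $\O_{S\setminus C}$. For amphidrome annuli, the first return $g$ on $N(c)$ is an involution (after isotopy), so $n=2$. Its quotient is a disk ($h=0$, $b=1$) with $u=2$ branch points, each with $m_i=1$. The periodic formula then gives
\[
\frac{(x^{2\ell}-1)^{1}}{(x^{\ell}-1)^2}=\frac{x^{\ell}+1}{x^{\ell}-1},
\]
matching the factor in \eqref{formula:Nielsen_formula_psi}.

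The main obstacle is making the orientation and sign bookkeeping precise on the annuli. One must verify that the first-return map acts by $-1$ on $H_1(N(c))$ exactly in the amphidrome case, which uses that $f$ preserves the orientation of $S$. One must also check that it can be isotoped to a genuine periodic map (a rotation composed with a reflection-type involution of the annulus) compatible with the chosen representatives on adjacent pieces. A twist along $c$ acts trivially on homology of the annulus, so it does not affect the count.
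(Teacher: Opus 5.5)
Your proposal is correct and is essentially the paper's proof: the same Mayer--Vietoris sequence for $S=(S\setminus C)\cup N(C)$, multiplicativity of characteristic polynomials along exact sequences, and the block-cyclic formula~\eqref{formula:char_pol_for_block_cyclic_matrix} applied orbit by orbit. The only difference is cosmetic: you evaluate the $N(C)$ and $\partial N(C)$ terms separately, getting $1$ for the boundary circles and $(x^{\ell}-1)/(x^{\ell}+1)$ for amphidrome annuli, whereas the paper bundles them into the single factor $p_V(x)$. Also, the compatibility concern in your last paragraph is not actually needed, since the computation uses only the homological action of the restriction of $f$ to each piece, and the amphidrome branch data enter only through the identity $(x^{2\ell}-1)/(x^{\ell}-1)^2=(x^{\ell}+1)/(x^{\ell}-1)$.
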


	 Note that the product is taken over orbits of closures of components of $S \setminus C$ and amphidrome annuli. Non-amphidrome annuli make no contribution to $\chi_f(x)$. 
	 
	 Before we proceed to the proof of this formula, recall that the mapping class group $\Mod(A)$ of mappings of an annulus $A = S^1 \times [-1,1]$ that can swap its boundary components is of the form
	 $$
	 \Mod(A) = \geng{T,R} \cong \Z \rtimes \Z/2\Z \cong D_\infty,
	 $$
	 where $D_\infty$ is the infinite dihedral group, $T$ is the Dehn twist about $S^1 \times \{0\}$, and $R$ is an orientation-preserving involution of $A$ exchanging its boundary components. More specifically, one can put
	 $$
	 T(\theta,t) = (\theta+\pi(t+1),t) \ \text{ and } \ R(\theta,t) = (-\theta,-t) \ \text{ for } \ (\theta,t)\in S^1\times[-1,1],
	 $$
	 which makes it easy to check that $RTR^{-1} = T^{-1}$, $R^2 = \id_A$. Moreover, straightforward computation shows
	 $$
	 \chi_{T^k}(x) = x-1 \ \ \text{ and } \ \ \chi_{RT^k}(x) = x+1.
	 $$
	 Finally, if $N(c)$ is an amphidrome annulus, then $g_{N(c)} = RT^k$ for some $k$, so it is of order $n = 2$ with two fixed points $(0,0)$ and $(\pi,0)$, thus $u=2$ and $m_1 = m_2 =1$. The quotient $N(c)/\geng{RT^k}$ is homeomorphic to a disc, so $h = 0$ and $b=1$. Therefore
	 $$
	 \psi_{RT^k}(x) = \frac{x+1}{x-1} = \frac{(x^2-1)^{2h+u+b-2}}{(x-1)(x-1)}
	 $$
	 is consistent with formula~\eqref{formula:Nielsen_periodic} and justifies the equality of formulas~\eqref{formula:Nielsen_formula_branch_datum} and~\eqref{formula:Nielsen_formula_psi}.

	 \begin{proof}[Proof of Theorem \ref{theorem:Nielsen_formula}]
	 	Take $U = S \setminus C$ and $V = N(C)$, then their interiors cover $S$ and $U\cap V \simeq C_- \sqcup C_+$, where $C_\pm = C \times \{\pm 1\} \subset N(C)$. Consider the Mayer--Vietoris long exact sequence for $S=U\cup V$:
\[
\begin{gathered}
0\to H_2(S)\to H_1(U\cap V)\to H_1(U)\oplus H_1(V)\to H_1(S)\\
\to H_0(U\cap V)\to H_0(U)\oplus H_0(V)\to H_0(S)\to 0.
\end{gathered}
\]
	 	Since $f(U) = U$ and $f(V) = V$, from the naturality of the sequence and multiplicativity of the characteristic polynomial for exact sequences of free $\Z$-modules, $\chi_f(x) = \det(x I - f_{*}\vert_{H_1(S)})$ is equal to
		\[
		\begin{aligned}
		\chi_f(x)=&\,\det(x I-f_{*}\vert_{H_0(S)})\,
		\det(x I-f_{*}\vert_{H_2(S)})\\
		&\cdot
		\underbrace{\frac{\det(x I-f_{*}\vert_{H_1(U)})}{\det(x I-f_{*}\vert_{H_0(U)})}}_{=:p_U(x)}\\
		&\cdot
		\underbrace{\frac{\det(x I-f_{*}\vert_{H_1(V)})\,
		\det(x I-f_{*}\vert_{H_0(U\cap V)})}
		{\det(x I-f_{*}\vert_{H_1(U\cap V)})\,
		\det(x I-f_{*}\vert_{H_0(V)})}}_{=:p_V(x)}.
		\end{aligned}
		\]
		 Clearly, $\det(x I - f_{*}\vert_{H_0(S)})\, \det(x I - f_{*}\vert_{H_2(S)}) = (x-1)^{1+\omega}$, because $S$ is connected and $f$ is orientation-preserving.
		 
		 For $p_U(x)$, we use the splitting of the homology groups along orbits and formula~\eqref{formula:char_pol_for_block_cyclic_matrix}:
		 \begin{align*}
		p_U(x)
		 = 
		 \prod_{\Sigma \in \O_{S\setminus C}}
	 	\frac{
	 		\det(x^{\ell(\Sigma)} I - f^{\ell(\Sigma)}_{*}\vert_{H_1(\Sigma)})
	 	}{
	 		\det(x^{\ell(\Sigma)} I - f^{\ell(\Sigma)}_{*}\vert_{H_0(\Sigma)})
	 	}
	 	=
	 	\prod_{\Sigma \in \O_{S\setminus C}}
	 	\frac{
	 		\chi_{f^{\ell(\Sigma)}|_\Sigma}(x^{\ell(\Sigma)})
	 	}{
	 		(x^{\ell(\Sigma)}-1)
	 	}
	 	=
	 	\prod_{\Sigma \in \O_{S\setminus C}}
	 	\psi_{f^{\ell(\Sigma)}|_\Sigma}(x^{\ell(\Sigma)}).
		 \end{align*}
		Finally, for $p_V(x)$ further identify $H_*(V) = H_*(C)$ and $H_*(U\cap V) = H_*(C_- \sqcup C_+)$ and use the fact that $(f^{\ell(c)}|_c)^2 = \id_c$ for $c \in \O^A_C$:
		 \begin{align*}
		 p_V(x)
		 &= 
		 \prod_{c \in \O_{C}}
		 	\frac{
				\chi_{f^{\ell(c)}|_c}(x^{\ell(c)})
				\det(x^{\ell(c)} I - (f^{\ell(c)})_{*}\vert_{H_0(c_-\sqcup c_+)})
			}{
				\det(x^{\ell(c)} I - (f^{\ell(c)})_{*}\vert_{H_1(c_-\sqcup c_+)})\,
				\det(x^{\ell(c)} I - (f^{\ell(c)})_{*}\vert_{H_0(c)})
			}
		 \\&=
		 \prod_{c \in \O_{C} \setminus \O_{C}^A}
		 \frac{
		 	(x^{\ell(c)}-1)\,
		 	(x^{\ell(c)}-1)^2
		 }{
		 	(x^{\ell(c)}-1)^2\,
		 	(x^{\ell(c)}-1)
		 }\,
		 \prod_{c \in  \O_{C}^A}
		 \frac{
		 (x^{\ell(c)}+1)\,
		 (x^{2\ell(c)}-1)
		 }{
		 (x^{2\ell(c)}-1)\,
		 (x^{\ell(c)}-1)
		 }
		 \\&=
		\prod_{c\in\mathcal O_C^{A}}
		\frac{x^{\ell(c)}+1}{x^{\ell(c)}-1}.
		 \end{align*}
	This proves the desired formulas.
	 \end{proof}

\section{Realizability criteria for cyclotomic polynomials}

\subsection{Reduction systems and irreducible characteristic polynomials}
	The following standard observation will be used repeatedly.

    \begin{lemma}\label{lemma:irreducible_polynomial--only_separating_curves}
Let \(g\geq 2\), and let \(f\in \operatorname{Mod}(S_g)\) be reducible.
Suppose that \(\chi_f(x)\) is symplectically irreducible over \(\mathbb Z\).
Then no reduction system for \(f\) contains a non-separating curve.
\end{lemma}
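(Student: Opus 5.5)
Suppose, for contradiction, that some reduction system $C$ for $f$ contains a non-separating curve $c$. The plan is to build from the orbit of $c$ an $f_*$-invariant isotropic subspace of $H_1(S_g;\Q)$. Its symplectic structure will then force a factorization of $\chi_f(x)$ into two symplectic polynomials of positive degree, contradicting symplectic irreducibility.

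Let $c_1=c, c_2=f(c),\ldots,c_\ell=f^{\ell-1}(c)$ be the orbit of $c$ under a representative of $f$ preserving $C$. These curves are pairwise disjoint, so their homology classes $[c_i]$ have pairwise zero algebraic intersection. Let $W\subset H_1(S_g;\Q)$ be their span. Then $W$ is isotropic for the intersection form and $f_*$-invariant, since $f_*[c_i]=\pm[c_{i+1}]$. Moreover $W\neq 0$, because $c$ is non-separating and so $[c]\neq 0$. Its dimension satisfies $1\le k:=\dim W\le g$, since isotropic subspaces have dimension at most $g$.

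The key step is the following. Because the form $\langle\cdot,\cdot\rangle$ is preserved, the annihilator $W^\perp$ is also $f_*$-invariant and contains $W$. This gives an invariant flag $0\subset W\subset W^\perp\subset H$, where $H=H_1(S_g;\Q)$. The intersection form induces a perfect pairing between $W$ and $H/W^\perp$ that is invariant under $f_*$. Hence the action on $H/W^\perp$ is dual-inverse to the action on $W$. Write $q(x)=\chi_{f_*|_W}(x)$, which lies in $\Z[x]$, is monic, and has degree $k$. Then the characteristic polynomial on $H/W^\perp$ is the reciprocal polynomial $q^*(x)=\pm x^k q(1/x)/q(0)$. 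Since $f_*$ is integral and invertible on the lattice $W\cap H_1(S_g;\Z)$, which is saturated, we have $q(0)=\pm1$. Finally, $W^\perp/W$ carries an induced nondegenerate symplectic form preserved by $f_*$, so its characteristic polynomial $r(x)$ is symplectic of degree $2g-2k$, possibly equal to $1$.

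Combining these, $\chi_f=q\,q^*\,r$. The product $q q^*$ is monic, has constant term $1$ (up to the sign normalization, which is forced because $\chi_f(0)=1$), and is reciprocal of degree $2k\ge 2$. So $qq^*$ is symplectic. If $k<g$, then $r$ is symplectic of positive degree and $\chi_f=(qq^*)\cdot r$ is a symplectic factorization, a contradiction. If $k=g$, we need instead that $qq^*$ itself splits. The easiest route is to note that the entries of $f_*$ are integers and $f_*$ permutes the classes $\pm[c_i]$ up to sign. Hence $f_*|_W$ has finite order and $q$ is a product of cyclotomic factors times $(x\pm1)$ factors. I would then argue that $\chi_f=q\,q^*$ with $q^*$ a reciprocal of $q$ is already symplectically reducible unless $g=1$.

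I expect this $k=g$ case to be the main obstacle. The intended resolution is to observe that cyclotomic-type factors of $qq^*$ pair up so that $\chi_f$ is a square, or a product of at least two symplectic factors. This uses $g\ge2$, and uses that $(x\pm1)$ factors occur to even multiplicity in a symplectic polynomial. In the degenerate subcase the whole thing is a power of one cyclotomic polynomial $\varphi_n^2$ with $\deg\varphi_n=g\ge2$, which is still reducible symplectically as $\varphi_n\cdot\varphi_n$ when $\varphi_n$ is reciprocal (true for $n\ge3$).
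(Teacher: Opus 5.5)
Your proposal is correct and follows essentially the same route as the paper's proof. You take the isotropic invariant span $W$ of the orbit of $c$ and the flag $0\subset W\subset W^\perp\subset H_1(S_g;\Q)$, with the dual action on the top quotient and a symplectic middle quotient. In the Lagrangian case you use that $f_*|_W$ has finite order, so that $\chi_f=q^2$ with $q$ a product of cyclotomic polynomials, and split it into symplectic blocks $\varphi_m$ ($m\ge 3$) and $(x\pm 1)^2$ using $g\ge 2$.

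The only difference is minor. The paper uses the finite order of $f_*|_W$ from the outset to identify $q^*=q$ and write $\chi_f=q^2\bar q$. You need it only when $W=W^\perp$, since $qq^*$ is symplectic for any integral $q$ with $q(0)=\pm1$.
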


\begin{proof}
Let \(A=f_*\) denote the induced symplectic automorphism of
\(V=H_1(S_g;\mathbb Q)\). Suppose, for a contradiction, that some
\(f\)-invariant reduction system \(C\) contains a non-separating curve
\(c\). Let \(\mathcal O=\{c_1,\ldots,c_m\}\) be the \(f\)-orbit of \(c\),
and set
\[
  W=\langle [c_1],\ldots,[c_m]\rangle_{\mathbb Q}
  \subset H_1(S_g;\mathbb Q).
\]
Since \(c\) is non-separating, \(W\neq 0\). Since \(A\) permutes the
homology classes \([c_i]\) up to sign, \(W\) is \(A\)-invariant, and
\(A|_W\) has finite order. In particular, $q(x):=\chi_{A|_W}(x)$ is a product of cyclotomic polynomials.

The curves in \(\mathcal O\) are pairwise disjoint, so the algebraic
intersection form vanishes on \(W\). Thus \(W\) is isotropic. Hence
\[
  0\subset W\subset W^\perp\subset V
\]
is an \(A\)-invariant filtration, and \(W^\perp/W\) inherits a
nondegenerate symplectic form. Let \(\bar A\) be the induced symplectic
automorphism of \(W^\perp/W\), and write $\bar q(x) :=\chi_{\bar A}(x)$. The action induced by \(A\) on \(V/W^\perp\) is dual to \(A|_W^{-1}\).
Since \(A|_W\) has finite order, \(q(x)\) is a product of cyclotomic
polynomials, and hence its multiset of roots is invariant under
inversion. Therefore
the characteristic polynomial of the action on \(V/W^\perp\) is again
\(q(x)\). It follows from the above filtration that
\[
  \chi_A(x)=q(x)^2\bar q(x).
\]

Now \(q(x)^2\) is a symplectic polynomial; indeed, \(q(x)\) is a product of
cyclotomic polynomials, and squaring accounts for the possible factors
\(\varphi_1(x)=x-1\) and \(\varphi_2(x)=x+1\). Also, \(\bar q(x)\) is
symplectic, because it is the characteristic polynomial of a symplectic
automorphism of \(W^\perp/W\).

If \(W\neq W^\perp\), then both \(q(x)^2\) and \(\bar q(x)\) have
positive degree, so $\chi_A(x)=q(x)^2\bar q(x)$ is a nontrivial factorization into symplectic polynomials. This contradicts the symplectic irreducibility of \(\chi_f(x)\).

It remains to consider the case \(W=W^\perp\), so that \(W\) is
Lagrangian. Then \(\bar q(x)=1\) and $\chi_A(x)=q(x)^2$. Since \(g\geq 2\), the polynomial \(q(x)^2\) is still symplectically reducible. Indeed, writing
\[
  q(x)=\prod_m \varphi_m(x)^{e_m},
\]
we note that the $m$th cyclotomic polynomial \(\varphi_m(x)\) is symplectic for \(m\geq 3\) (see, for instance, \cite{Margalit-Spallone,Yang}), while \(\varphi_1(x)=x-1\) and \(\varphi_2(x)=x+1\) become symplectic only in pairs:
\[
  \varphi_1(x)^2=(x-1)^2
  \quad\text{and}\quad
  \varphi_2(x)^2=(x+1)^2.
\]
Since \(\deg q=g\geq 2\), the factors of \(q(x)^2\) can
be divided into two nonempty products of such symplectic factors. Hence
\(\chi_A(x)=q(x)^2\) is symplectically reducible, again contradicting
the hypothesis.

Therefore no reduction system for \(f\) contains a non-separating curve.
\end{proof}
	
	
	The next lemma isolates the part of the Casson--Bleiler argument \cite[Theorem 14.5]{Farb-Marg} which will be needed below.

\begin{lemma}\label{lemma:orbits_for_reducible_mappings_with_irreducible_char_polynomial}
Let \(f \in \operatorname{Mod}(S_g)\) be reducible and suppose that
\(\chi_f(x)\) is symplectically irreducible over \(\mathbb Z\). Then
there is a reduction system \(C=\{c_1,\ldots,c_k\}\) such that exactly
one of the following holds:
\begin{itemize}
\item \(k=1\), the two components of \(S_g\setminus C\) are exchanged by
\(f\), and \(N(c_1)\) is an amphidrome annulus; equivalently,
\(O_{S\setminus C}=\{\Sigma\}\) and \(\ell(\Sigma)=2\);
\item \(k\geq 3\), the components of \(S_g\setminus C\) consist of an
\(f\)-invariant sphere \(S_{0,k}\) with \(k\) boundary components and
\(k\) homeomorphic positive-genus components cyclically permuted by
\(f\). Equivalently, \(O_{S\setminus C}=\{\Sigma,S_{0,k}\}\),
\(\ell(\Sigma)=k\), \(\ell(S_{0,k})=1\), and \(O_C^A=\varnothing\).
\end{itemize}
\end{lemma}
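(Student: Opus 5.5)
We follow the Casson--Bleiler argument, starting from the canonical reduction system. Let $C$ be the canonical reduction system of $f$; it is nonempty since $f$ is reducible. By Lemma~\ref{lemma:irreducible_polynomial--only_separating_curves}, every curve of $C$ is separating. Because all curves separate, the dual graph $\Gamma$ of $C$ is a finite tree: its vertices are the components of $S_g\setminus C$ and its edges are the curves. The homeomorphism $f$ acts on $\Gamma$ by tree automorphisms.

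The first key step is a homology splitting. Since every curve separates, $H_1(S_g;\Q)$ is the orthogonal direct sum of the groups $H_1(\hat\Sigma;\Q)$, where $\hat\Sigma$ is the closed surface obtained by capping each component $\Sigma$ with discs. This decomposition is symplectic and $f$-equivariant, so it induces a splitting $H_1(S_g)=\bigoplus_{\mathcal O} V_{\mathcal O}$ over $f$-orbits $\mathcal O$ of components. Each $V_{\mathcal O}$ is a symplectic $f$-invariant subspace, so $\chi_f=\prod_{\mathcal O}\chi_{f|V_{\mathcal O}}$ is a product of symplectic polynomials. Symplectic irreducibility then forces exactly one orbit $\mathcal O_0$ to consist of positive-genus components, and all other components must be genus-zero spheres with holes.

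The second step is to use the structure of the tree. A finite tree automorphism fixes either a vertex or an edge; in the latter case it flips that edge. Every leaf of $\Gamma$ has positive genus, since otherwise the leaf would be a disc or an annulus and the corresponding curve would be inessential or parallel to another curve. Hence the leaves all lie in the single orbit $\mathcal O_0$.

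\emph{Case of a fixed edge.} If $f$ fixes an edge $c$ and flips it, then $c$ separates $\Gamma$ into two subtrees, and $f$ exchanges them. Let $k$ be the number of leaves; these are exactly the positive-genus components. If a genus-zero vertex existed, we would simplify by passing to a sub-reduction system, replacing $C$ by an $f$-invariant subset. In particular, we take $C'=\{c\}$. Then $S_g\setminus c$ consists of two components swapped by $f$, and $f^2$ preserves $c$ while reversing the orientation of its normal direction, so $N(c)$ is amphidrome. This gives the first alternative. The claim is only that \emph{some} reduction system exists, not that the canonical one works, so shrinking is allowed.

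\emph{Case of a fixed vertex $v$ with no fixed edge.} If $v$ has positive genus, then $v$ lies in $\mathcal O_0$ and $\mathcal O_0=\{v\}$. The leaves are then also positive-genus vertices in the same orbit, so $v$ is the only vertex. This forces $C=\varnothing$, a contradiction. Hence $v$ is a sphere. Consider the $f$-invariant system $C'$ formed by the boundary curves of $v$; these are the edges at $v$, and there are $k$ of them. Cutting along $C'$ gives $v\cong S_{0,k}$ together with $k$ complementary pieces $\Sigma_1,\dots,\Sigma_k$, each of which has positive genus, because a subtree always contains a leaf. The next claim is that $f$ permutes the $\Sigma_i$ transitively. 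The action on the edges at $v$ is compatible with the splitting: $H_1(S_g)=\bigoplus_i H_1(\hat\Sigma_i)$, and each orbit of the $\Sigma_i$ contributes a symplectic factor of $\chi_f$. Irreducibility therefore allows only one orbit, so the action is transitive and $\ell(\Sigma)=k$. We need $k\ge 3$. If $k=2$, the two boundary curves of $v$ are isotopic, so $v$ would be an annulus, which is impossible. If $k=1$, then $v$ is a disc, also impossible. No curve in $C'$ is amphidrome, because each curve bounds the invariant sphere $v$ on one side and a non-invariant piece on the other, so its two sides cannot be exchanged. Hence $\O_C^A=\varnothing$, which gives the second alternative.

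The main obstacle is ensuring that the pieces in the new systems are described correctly. In the fixed-edge case, a piece may be a union of several original components, but each half still has positive genus, which is all the first alternative requires. The two cases are exclusive, since the first has $k=1$ and the second has $k\ge3$.
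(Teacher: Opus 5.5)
Your proof is correct, but it is organized differently from the paper's.

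\textbf{The paper's route.} The paper fixes from the outset a nonempty invariant multicurve that is minimal under inclusion, i.e.\ a single $f$-orbit of curves. This single-orbit property rules out an edge of the dual tree joining two genus-zero vertices. Combined with the fact that the positive-genus vertices are all leaves lying in one orbit, it forces the tree itself to be either one flipped edge or a star around a genus-zero center, so no re-cutting is needed.

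\textbf{Your route.} You start from an arbitrary nonempty reduction system and locate the flipped edge or the fixed vertex. You then pass to the subsystem $\{c\}$, or to the boundary curves of the fixed genus-zero vertex $v$. Finally you re-run the symplectic splitting on the coarser decomposition $H_1(S_g)=\bigoplus_i H_1(\hat\Sigma_i)$ to obtain transitivity.

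\textbf{What each buys.} Your argument avoids the ``no edge between two genus-zero vertices'' step. It also shows that the required system can be found inside \emph{any} given reduction system, for example the canonical one. The paper's argument instead yields a slightly different structural fact: every minimal invariant multicurve already has one of the two shapes.

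\textbf{Points to tidy.}
\begin{itemize}
\item You use no property of the canonical reduction system, so simply start from an arbitrary nonempty invariant multicurve. Under the convention in which a periodic map preserving a multicurve counts as reducible, the canonical system can be empty, and your claim that it is nonempty would need justification.
\item In the flipped-edge case $\ell(c)=1$. It is $f$ itself, not $f^2$, that exchanges the two sides of $c$; $f^2$ preserves them. This is exactly why $N(c)$ is amphidrome.
\item ``Fixed vertex with no fixed edge'' should simply read ``fixed vertex.'' A fixed vertex precludes a flipped edge, and your argument in that case never uses the absence of fixed edges.
\end{itemize}
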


\begin{proof}
By Lemma~\ref{lemma:irreducible_polynomial--only_separating_curves}, every curve in every reduction system for $f$ is separating. Choose a nonempty reduction system $C$ which is minimal with respect to inclusion among nonempty $f$-invariant multicurves. Equivalently, $C$ is a single $f$-orbit of separating curves. Let $\Gamma_C$ be the dual graph of the decomposition of $S_g$ along $C$; vertices correspond to components of $S_g\setminus C$, and edges correspond to curves of $C$. Since the curves of $C$ are separating, $\Gamma_C$ is a tree, and $f$ induces an automorphism of this tree.

We first record the symplectic consequence of the hypothesis. For a
component \(Y\) of \(S_g\setminus C\), let \(G(Y)\) denote the image of
\(H_1(Y;\mathbb Q)\) in \(H_1(S_g;\mathbb Q)\), after quotienting out
the boundary classes. Equivalently, \(G(Y)\) is the genus part of the
homology of \(Y\). Since all curves of \(C\) are separating, each
\(G(Y)\) is a symplectic subspace of dimension \(2\operatorname{genus}(Y)\),
and there is an orthogonal symplectic direct-sum decomposition
\[
  H_1(S_g;\mathbb Q)
  \cong
  \bigoplus_{\operatorname{genus}(Y)>0} G(Y).
\]
The map \(f_*\) permutes the nonzero summands in this decomposition.

If the positive-genus components formed more than one \(f\)-orbit, then
the direct sum of the summands \(G(Y)\) over one such orbit would be a
proper nonzero \(f_*\)-invariant symplectic subspace of
\(H_1(S_g;\mathbb Q)\). Moreover, this subspace is defined by an
integral symplectic sublattice coming from the corresponding union of
subsurfaces. Hence \(\chi_f(x)\) would split as a product of two
positive-degree symplectic polynomials, contradicting the symplectic
irreducibility of \(\chi_f(x)\). Therefore the positive-genus vertices
of \(\Gamma_C\) form a single \(f\)-orbit.

Now consider the leaves of $\Gamma_C$. Every leaf has positive genus: a genus-zero leaf would be a sphere with one boundary component, and its boundary curve would be inessential in $S_g$. Since the positive-genus vertices form one orbit, and since the set of leaves is invariant under every automorphism of a tree, it follows that the leaves of $\Gamma_C$ form a single $f$-orbit. Moreover, every positive-genus vertex is a leaf, because there is at least one leaf and the orbit of any positive-genus leaf is the whole set of positive-genus vertices.

We will also use that no edge of $\Gamma_C$ joins two genus-zero vertices. Indeed, $C$ is a single $f$-orbit of curves, so if one edge joined two genus-zero vertices, then every edge would join two genus-zero vertices. This is impossible because every tree has a leaf, and every leaf of $\Gamma_C$ has positive genus.

The automorphism of a finite tree induced by $f$ has either a fixed vertex or a fixed edge. Suppose first that it has a fixed edge $e$. The edge $e$ cannot join two genus-zero vertices, by the preceding paragraph. It also cannot join a positive-genus vertex to a genus-zero vertex: in that case the two endpoints would be fixed individually, giving a fixed positive-genus vertex, whereas the positive-genus vertices form a single orbit and include at least two leaves. Hence both endpoints of $e$ have positive genus. Since all positive-genus vertices form one orbit, there are exactly two such vertices; since every positive-genus vertex is a leaf, the tree consists of the single edge $e$. Thus $C$ consists of one separating curve, and $f$ exchanges the two complementary components of $S_g\setminus C$. Equivalently, the annular neighbourhood of this curve is amphidrome annulus. This is the first case.

It remains to treat the case in which the induced automorphism fixes a vertex $v$. The vertex $v$ cannot have positive genus, for then the single-orbit property for positive-genus vertices would imply that there is only one positive-genus vertex, contradicting the existence of at least two positive-genus leaves. Thus $v$ has genus zero.

All positive-genus vertices are leaves and lie in one orbit, so they all have the same distance from $v$. This common distance is equal to one. Indeed, if a path from $v$ to a positive-genus leaf had length greater than one, then the first edge on that path would join $v$ to a genus-zero vertex, because any positive-genus vertex is already a leaf; this would contradict the fact that no edge joins two genus-zero vertices. Hence $\Gamma_C$ is a star: it has a single central genus-zero vertex $v$, and all other vertices are positive-genus leaves adjacent to $v$.

The component corresponding to $v$ is therefore a sphere with $k$ boundary components, where $k$ is the number of edges of $\Gamma_C$, equivalently the number of curves in $C$. The $k$ positive-genus components are permuted transitively, hence cyclically, by $f$. The cases $k=1$ and $k=2$ are excluded: for $k=1$ the central sphere would be a disk, making the unique curve in $C$ inessential, while for $k=2$ the central sphere would be an annulus, making the two curves of $C$ isotopic. Thus $k\geq 3$. Finally, after the first return to any curve of $C$, both the central sphere and the adjacent positive-genus component are preserved setwise. Therefore the two sides of the corresponding annular neighborhood are not exchanged, so no annulus is an amphidrome annulus. This is the second case.
\end{proof}

\subsection{Numerator indices}

 	 We denote the $n^{th}$ cyclotomic polynomial by $\varphi_n(x)$. It is well-known that
\begin{equation}\label{formula:cyclotomic_polynomial}
	\varphi_n(x)=\prod_{d\mid n}(x^d-1)^{\mu(n/d)},
\end{equation}
where $\mu \colon \N \to \{-1,0,1\}$ is the M{\"o}bius function.

We will need the following elementary bookkeeping fact. Suppose that a nonzero polynomial $P(x)\in\Q[x]$ is written as a \emph{finite rational product} of the form
\[
P(x)=\prod_{d\geq 1}(x^d-1)^{a_d}, \qquad a_d\in\Z.
\]
Then the exponents $a_d$ are uniquely determined by the polynomial $P$. Indeed, after factoring $x^d-1=\prod_{e\mid d}\varphi_e(x)$, unique factorization in $\Q[x]$ determines the exponent $b_e$ of each cyclotomic factor $\varphi_e(x)$ in $P(x)$, and these exponents satisfy
\[
b_e=\sum_{e\mid d} a_d.
\]
Starting with the largest $d$ for which $a_d\neq 0$ and descending, these equations determine all $a_d$. Thus it makes sense to speak of the \emph{numerator indices} of such an expression, namely the set of $d$ for which $a_d>0$. Formula~\eqref{formula:cyclotomic_polynomial} is the corresponding unique expression for $\varphi_n(x)$.

The next lemma shows that Nielsen's formula imposes a strong restriction on symplectically irreducible characteristic polynomials coming from algebraically finite type mapping classes. Since all roots of such polynomials are roots of unity, they are products of cyclotomic polynomials; in the symplectically irreducible case the only possibilities are $\varphi_n(x)$ for $n\geq 3$, together with the exceptional cases $(x-1)^2$ and $(x+1)^2$.

\begin{lemma}\label{lemma:aft_irreducible_at_most_three_numerators}
Let $f\in\AFT(S_g)$ and suppose that $\chi_f(x)$ is symplectically irreducible over $\Z$. Write
\[
\chi_f(x)=\prod_{d\geq 1}(x^d-1)^{a_d}
\]
in the unique rational product form above. Then there are at most three indices $d$ for which $a_d>0$, i.e.~at most three numerator indices.
\end{lemma}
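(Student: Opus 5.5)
The plan is to split according to the Nielsen--Thurston type of $f$: either $f$ is periodic, or $f$ is reducible and we may invoke Lemma~\ref{lemma:orbits_for_reducible_mappings_with_irreducible_char_polynomial}. In each case we substitute the resulting data into Nielsen's formula (Theorem~\ref{theorem:Nielsen_formula_periodic} or Theorem~\ref{theorem:Nielsen_formula}). This writes $\chi_f$ as a product of several explicit expressions of the form $\prod_d (x^d-1)^{a_d}$.

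The basic bookkeeping observation is that exponents add when such expressions are multiplied. Combined with the uniqueness of the rational product form, this shows that the numerator indices of $\chi_f$ form a subset of the union, over the factors, of the indices $d$ that occur with positive exponent in some factor. In every factor arising from Nielsen's formula, the only terms that can have positive exponent are $(x-1)^{1+\omega}$ and the terms $(x^{\ell(\Sigma)n(\Sigma)}-1)^{2h+u+b-2}$. All terms $(x^{\ell m_i}-1)$ lie in the denominator. So it suffices to count the possible indices $1$ and $\ell(\Sigma)n(\Sigma)$. Since $S_g$ is closed, $\omega=1$.

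The small genera I would dispose of first, because Lemma~\ref{lemma:orbits_for_reducible_mappings_with_irreducible_char_polynomial} needs $g\ge 2$. For $g=0$ there is nothing to prove. For $g=1$, a symplectically irreducible $\chi_f$ with cyclotomic roots is one of $\varphi_3,\varphi_4,\varphi_6,(x\pm1)^2$. Writing these explicitly, for example
\[
\varphi_6(x)=\frac{(x^6-1)(x-1)}{(x^3-1)(x^2-1)}
\quad\text{and}\quad
(x+1)^2=\frac{(x^2-1)^2}{(x-1)^2},
\]
shows that each has at most two numerator indices.

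For $g\ge2$ the cases are as follows.
\begin{itemize}
\item \emph{Periodic case.} Formula~\eqref{formula:Nielsen_periodic} gives $\chi_f=(x-1)^2(x^n-1)^{2h+u-2}/\prod_i(x^{m_i}-1)$, so the numerator indices lie in $\{1,n\}$.
\item \emph{First case of Lemma~\ref{lemma:orbits_for_reducible_mappings_with_irreducible_char_polynomial}.} There is a single orbit $\Sigma$ with $\ell(\Sigma)=2$ and one amphidrome annulus with $\ell=1$, contributing
\[
\frac{x+1}{x-1}=\frac{x^2-1}{(x-1)^2}.
\]
Formula~\eqref{formula:Nielsen_formula_psi} then gives numerator indices in $\{1,2,2n(\Sigma)\}$. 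In fact the factor $(x-1)^2$ cancels, leaving $\{2,2n(\Sigma)\}$, but at most three suffices in any case.
\item \emph{Second case of Lemma~\ref{lemma:orbits_for_reducible_mappings_with_irreducible_char_polynomial}.} Here $\mathcal O^A_C=\varnothing$, and the orbits are $\Sigma$ with $\ell=k$ and $S_{0,k}$ with $\ell=1$. The possible numerator indices are therefore $1$, $k\,n(\Sigma)$ and $n(S_{0,k})$, which is three.
\end{itemize}

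I expect the main subtlety to be making the ``subset of the union'' claim rigorous, since cancellations between factors can only remove numerator indices and never create new ones. Equally important is checking that no term indexed by some $m_i$ ever appears with positive exponent; this holds because in Nielsen's formula such terms appear only in the denominator. Beyond that, the argument is a direct case check.
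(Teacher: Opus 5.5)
Your route is the paper's own: the periodic case via \eqref{formula:Nielsen_periodic}, and the reducible case by feeding the system of Lemma~\ref{lemma:orbits_for_reducible_mappings_with_irreducible_char_polynomial} into Theorem~\ref{theorem:Nielsen_formula}.

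Your separate treatment of $g\le 1$ is a real improvement. Lemma~\ref{lemma:orbits_for_reducible_mappings_with_irreducible_char_polynomial} rests on Lemma~\ref{lemma:irreducible_polynomial--only_separating_curves}, which needs $g\ge 2$. For example, a Dehn twist on the torus is reducible and of algebraically finite type, with symplectically irreducible $\chi_f=(x-1)^2$ and a nonseparating reduction curve.

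\textbf{The gap.} The reducible case has a gap, and the written proof has the same one. Theorem~\ref{theorem:Nielsen_formula} applies only to a reduction system on which \emph{every} first-return map is periodic. Lemma~\ref{lemma:orbits_for_reducible_mappings_with_irreducible_char_polynomial} only gives a minimal invariant multicurve, and nothing forces $f^{\ell(\Sigma)}|_\Sigma$ (or $f|_{S_{0,k}}$) to be periodic; it is merely of algebraically finite type and can be reducible. So the quantity $n(\Sigma)$ is undefined, and the claim that each orbit contributes the single candidate index $\ell(\Sigma)n(\Sigma)$ is unjustified. Refining $C$ until Theorem~\ref{theorem:Nielsen_formula} applies does not help, because the new orbits bring new candidate indices.

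\textbf{An example.} On $S_4$, let $f$ exchange the two halves $\Sigma,\Sigma'\cong S_{2,1}$ of a separating curve $\gamma$. Take $f^2|_\Sigma$ to be the $m=2$ map of Lemma~\ref{lemma:cyclic_substitution_construction} built from the order-$4$ map $h_4$ on $S_{1,1}$, composed with a suitable nonzero multitwist about the two curves cutting off the $S_{1,1}$'s.
\begin{itemize}
\item Then $\chi_f(x)=\varphi_4(x^4)=\varphi_{16}(x)$.
\item $\{\gamma\}$ is a system as in the first case of the lemma, but $f^2|_\Sigma$ has infinite order.
\item The periodic-adapted system, $\gamma$ together with the four curves, gives candidate indices $\{1,2,4,16\}$ in Nielsen's formula, whereas the true set is $\{16\}$.
\end{itemize}

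\textbf{The fix.} Avoid periodicity altogether and induct on genus. In either case of Lemma~\ref{lemma:orbits_for_reducible_mappings_with_irreducible_char_polynomial}, the block-cyclic computation of Lemma~\ref{lemma:reducible_mapping_and_irreducible_polynomial_r(x^k)} uses no periodicity. It gives $\chi_f(x)=\chi_{\hat h}(x^{\ell})$ with $\ell\in\{2,k\}$. Here $\hat h$ is the first-return map on a positive-genus piece, capped off to a closed surface of genus $g/\ell<g$.
\begin{itemize}
\item $\hat h$ is again of algebraically finite type, since it has no pseudo-Anosov pieces.
\item $\chi_{\hat h}$ is symplectically irreducible: a factorization $\chi_{\hat h}=r_1r_2$ into symplectic polynomials would give the symplectic factorization $\chi_f(x)=r_1(x^\ell)\,r_2(x^\ell)$.
\item If $\chi_{\hat h}(y)=\prod_d(y^d-1)^{a_d}$, then $\chi_f(x)=\prod_d(x^{\ell d}-1)^{a_d}$ is the unique expression for $\chi_f$. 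Hence $d\mapsto \ell d$ is a bijection of numerator indices.
\end{itemize}
Induction on $g$, with your periodic computation and your $g=1$ list as base cases, then proves the lemma. It even gives the sharper bound of at most two numerator indices.
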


\begin{proof}
If $f$ is periodic, then Nielsen's periodic formula~\eqref{formula:Nielsen_periodic} shows that the only possible numerator indices are $1$ and the order of $f$. Thus there are at most two numerator indices.

Suppose now that $f$ is reducible. Since $\chi_f(x)$ is a symplectically irreducible polynomial, Lemma~\ref{lemma:orbits_for_reducible_mappings_with_irreducible_char_polynomial} applies. Choose the reduction system described there and use Nielsen's formula~\eqref{formula:Nielsen_formula_branch_datum}. In the first case of Lemma~\ref{lemma:orbits_for_reducible_mappings_with_irreducible_char_polynomial}, there is one orbit of positive-genus complementary components, of length $2$, and one amphidrome annulus. The possible numerator indices are therefore contained in
\[
\{1,\;2n(\Sigma),\;2\},
\]
where $n(\Sigma)$ is the order of the first-return map on the positive-genus component $\Sigma$. Here the index $1$ comes from the global factor $(x-1)^2$, the index $2n(\Sigma)$ from the periodic first-return map on the component orbit, and the index $2$ from the amphidrome annulus.

In the second case, the complementary components consist of one $f$-invariant sphere $S_{0,k}$ and one orbit of $k\geq 3$ positive-genus components, with no amphidrome annuli. If $n(\Sigma)$ denotes the order of the first-return map on a positive-genus component and $n_0$ denotes the order of the first-return map on the central sphere, then the possible numerator indices are contained in
\[
\{1,\;k n(\Sigma),\; n_0\}.
\]
Again the index $1$ is the global factor, while the other two indices come from the two component orbits in Nielsen's formula. Denominator factors only subtract from the exponents $a_d$ and cannot create new positive indices. Hence in all cases there are at most three numerator indices.
\end{proof}

\subsection{Realizability}

We want to determine which cyclotomic polynomials are realizable as $\chi_f(x)$ for mapping classes $f\in\Mod(S)$ of algebraically finite type.
If $S$ has genus $g$, then for any $d$ dividing $4g$ or $4g+2$ one can easily provide a periodic mapping of $S$ of order $d$ by applying a rotation through $2\pi/d$ to the regular $(4g)$- or $(4g+2)$-gon representing $S$. In particular, if $p$ is an odd prime and $g=\frac{p-1}{2}$, then $d=p$ and $d=2p$ yield realizations of $\varphi_d(x)$ as $\chi_f(x)$ for a periodic mapping $f\in \Mod(S)$.

\begin{example}\label{example:periodic_of_order_pq}
	Let $p\neq q$ be distinct primes and $n=pq$. Consider the affine plane curve in $\C^2$ defined by
	$$
	y^{pq}=x^{p}(1-x)^{q}.
	$$
	Its projective closure has a smooth normalization, a connected closed Riemann surface $S$ of genus $g=\frac{(p-1)(q-1)}{2}$, such that the projection $\pi\colon S \to \C\mathrm{P}^{1}$ given by $(x,y)\mapsto x$ is a cyclic branched covering of degree $pq$. It has branch points $0$, $1$ and $\infty$: above $0$ there are $m_0 = p$ points each of ramification index $q$, above $1$ there are $m_1=q$ points each of ramification index $p$, and above $\infty$ there is a single point ($m_\infty=1$) of ramification index $pq$.
	
	Define a periodic mapping $f \in \Mod(S)$ of order $pq$ given by $f(x,y) = (x,\zeta_{pq}y)$, where $\zeta_{pq}$ is a primitive root of unity of order $pq$. Nielsen's formula~\eqref{formula:Nielsen_periodic} for $f$ yields
	$$
	\chi_f(x) = (x-1)^2\frac{ (x^{pq}-1) }{(x^p-1)(x^q-1)(x-1)} = \frac{ (x^{pq}-1)(x-1) }{(x^p-1)(x^q-1)} = \varphi_{pq}(x).
	$$
\end{example}

	\begin{example}\label{example:genus_2}
		By a direct Riemann--Hurwitz computation, $\Mod(S_2)$ has periodic elements of exactly the orders $\{1,2,3,4,5,6,8,10\}$; see \cite{Harvey}. Among them, $\varphi_n(x)$ occurs for $n\in\{5,8,10\}$. The only remaining possibility for a finite order of an element in $Sp(4,\Z)$ is $12$, that can occur for matrices with the characteristic polynomials $\varphi_{12}(x)$ or $\varphi_3(x)\cdot\varphi_4(x)$. The latter case is easily obtained in the class of algebraically finite type by taking periodic mappings of tori of order $3$ and $4$ and performing their connected sum along invariant neighbourhoods of fixed points.
		
		It turns out that $\varphi_{12}$ is also realized as $\chi_f(x)$ for $f$ of algebraically finite type. 
		The maximum order of a torsion element in $\Mod(S_2)$ is $10$, so in both the above cases the mappings are not periodic.

		Cut $S_2$ along an essential separating curve $c$, as in Figure \ref{figure:s2_with_separating_curve}, obtaining two components $\Sigma$ and $\Sigma'$. For $f\in \Mod(S_2)$ put $g$ on $\Sigma$ satisfying $\chi_g(x) = \varphi_6(x)$ and the identity on $\Sigma'$ and then compose with an order $2$ rotation permuting $\Sigma$ and $\Sigma'$. Then $f$ is orientation-preserving and the regular neighbourhood $N(c)$ of $c$ is an amphidrome annulus. Therefore Nielsen's formula~\eqref{formula:Nielsen_formula_psi} gives
		$$
		\chi_f(x) = (x-1)^2\cdot  \frac{\varphi_6(x^2)}{x^2-1} \cdot \frac{x+1}{x-1} = \varphi_6(x^2) = \varphi_{12}(x).
		$$
	\end{example}

	The polynomial $\varphi_{12}(x) = x^4 - x^2 + 1$ is the characteristic polynomial of the following symplectic matrix:
	$$
	\begin{bmatrix}
		0 & 1 & 0 & 0 \cr
		1 & 0 & -1 & 0 \cr
		0 & 0 & 0 & 1 \cr
		1 & 0 & 0 & 0
	\end{bmatrix}.
	$$

\begin{corollary}
	The following are characteristic polynomials of symplectic transformations induced by mapping classes of algebraically finite type on $S_2$:
	\begin{align*}
		&f(x)g(x), \text{ where } f(x),g(x) \in \{ (x-1)^2, (x+1)^2, \varphi_3(x), \varphi_4(x), \varphi_6(x) \}, \\
		&\varphi_5(x),\, \varphi_8(x),\, \varphi_{10}(x),\,\textrm{ and } \varphi_{12}(x).
	\end{align*}
\end{corollary}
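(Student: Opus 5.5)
We need to realize each listed polynomial as $\chi_f$ for some $f\in\AFT(S_2)$. The single polynomials $\varphi_5,\varphi_8,\varphi_{10}$ and $\varphi_{12}$ are already handled. For $\varphi_5,\varphi_8,\varphi_{10}$ we use the periodic elements of $\Mod(S_2)$ from Example~\ref{example:genus_2}. Concretely, the rotations of the regular $8$-gon and $10$-gon give orders $8$ and $10$, and the square of the order-$10$ rotation has order $5$. One checks with Theorem~\ref{theorem:Nielsen_formula_periodic} that the characteristic polynomials are the asserted cyclotomic ones. For instance, the order-$10$ rotation of the $10$-gon has quotient a sphere with $u=3$ branch points and $m_i\in\{1,2,5\}$, which gives $(x-1)^2(x^{10}-1)/((x-1)(x^2-1)(x^5-1))=\varphi_{10}(x)$. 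The cases $8$ and $5$ are analogous. The polynomial $\varphi_{12}$ is exactly the construction in Example~\ref{example:genus_2}.

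For the products $f(x)g(x)$ with $f,g\in\{(x-1)^2,(x+1)^2,\varphi_3,\varphi_4,\varphi_6\}$ we use a connected-sum construction along a separating curve. First, each of the five genus-one polynomials is realized by a periodic map $t$ of the torus $T$ with a fixed point:
\begin{itemize}
\item the identity gives $(x-1)^2$;
\item the hyperelliptic involution gives $(x+1)^2$;
\item the order-$3$, order-$4$ and order-$6$ rotations of the hexagonal and square tori give $\varphi_3,\varphi_4,\varphi_6$.
\end{itemize}
Each of these has a fixed point; for example, the origin of $\R^2/\Lambda$ is fixed by linear automorphisms of the lattice $\Lambda$.

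Given two such maps $t_1,t_2$, remove small invariant discs about fixed points $p_1,p_2$, chosen so that $t_i$ acts on each disc as a rotation. This gives two copies of $S_{1,1}$, which we glue along their boundary circles. To glue, we must match the boundary rotations, and here we use the freedom of a twist region. Insert an annulus $N(c)$ and define the map on it to interpolate between the two boundary rotations; on the annulus this is a power of a Dehn twist composed with a rotation, which is isotopic rel nothing to a twist. The resulting $f$ preserves the separating curve $c$. Its first-return maps on the two components are periodic, since $f$ restricts to $t_1,t_2$ away from a collar, so $f\in\AFT(S_2)$. Because $N(c)$ is not amphidrome (each side is preserved), Theorem~\ref{theorem:Nielsen_formula} applies. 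Alternatively, and more simply, we can argue directly: $H_1(S_2)=H_1(S_{1,1})\oplus H_1(S_{1,1})$ because $c$ is separating, and $f_*$ acts block-diagonally with blocks $(t_1)_*$ and $(t_2)_*$. Hence $\chi_f=\chi_{t_1}\chi_{t_2}$. This gives all fifteen products, including squares.

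The main obstacle is making the gluing along the boundary circles legitimate, since the rotation angles at $p_1$ and $p_2$ may differ. We handle this by modifying the map in the collar $N(c)$ by a fractional twist, i.e. an isotopy from one rotation to the other across the annulus. This changes $f$ only by Dehn twists about $c$, which act trivially on homology since $c$ is separating. Orientation-preservation is automatic, because each $t_i$ is orientation-preserving.
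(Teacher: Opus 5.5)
Your proposal is correct and follows essentially the paper's route. You realize $\varphi_5,\varphi_8,\varphi_{10}$ by periodic polygon rotations checked with Nielsen's periodic formula, take $\varphi_{12}$ from the amphidrome construction of Example~\ref{example:genus_2}, and obtain the products by connected sums of periodic torus maps along invariant discs about fixed points, interpolating the boundary rotations in a collar as in Corollary~\ref{corollary:realization_of_product_of_cyclotomic_polynomials}, with $\chi_f=\chi_{t_1}\chi_{t_2}$ read off directly from the block-diagonal action on $H_1$.
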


In fact, $\varphi_{12}(x)$ has a somewhat unusual property: it is realized by a reducible mapping class, but only by one without any pseudo-Anosov pieces.

	\begin{proposition}\label{proposition:phi12_no_mixed_reducible}
	The polynomial $\varphi_{12}(x)$ is realized as $\chi_f(x)$ by a non-periodic algebraically finite type mapping class $f\in\Mod(S_2)$, but it is not realized by any reducible mapping class with a pseudo-Anosov component.
	\end{proposition}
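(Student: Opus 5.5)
The existence part is already established by Example~\ref{example:genus_2}: the mapping class $f$ built there from a periodic map $g$ on $\Sigma$ with $\chi_g=\varphi_6$, the identity on $\Sigma'$, and a swap of the two halves has $\chi_f=\varphi_{12}$. It is not periodic, since $12$ exceeds the maximal torsion order $10$ in $\Mod(S_2)$. It is also reducible, because $c$ is invariant. So the remaining task is to rule out a reducible $f\in\Mod(S_2)$ with $\chi_f=\varphi_{12}$ and at least one pseudo-Anosov component.

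The polynomial $\varphi_{12}(x)=x^4-x^2+1$ is irreducible over $\Q$, hence symplectically irreducible. Lemma~\ref{lemma:orbits_for_reducible_mappings_with_irreducible_char_polynomial} therefore applies and gives two cases.

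\emph{Star case.} Here $k\ge 3$ positive-genus components would be cyclically permuted. This needs genus at least $3$, which is impossible in genus $2$.

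\emph{Single-curve case.} The reduction system $C=\{c\}$ consists of one separating curve splitting $S_2$ into two one-holed tori $\Sigma,\Sigma'$ exchanged by $f$, and $N(c)$ is amphidrome. The homology splits as $H_1(S_2)=G(\Sigma)\oplus G(\Sigma')$, and $f_*$ swaps the two summands. By formula~\eqref{formula:char_pol_for_block_cyclic_matrix}, $\chi_f(x)=\chi_{B}(x^2)$, where $B=(f^2)_*|_{G(\Sigma)}\in \mathrm{SL}(2,\Z)$. Since $\varphi_{12}(x)=\varphi_6(x^2)$ and characteristic polynomials are determined by this identity, we get $\chi_B=x^2-x+1$. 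Hence $B$ has order $6$ in $\mathrm{SL}(2,\Z)$.

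Next I would use that $\Mod(S_{1,1})\to \mathrm{SL}(2,\Z)$ is an isomorphism. The boundary of $\Sigma$ can be capped, or twists about $\partial\Sigma$ absorbed into the annulus $N(c)$. Then the first-return map $f^2|_\Sigma$ is determined by $B$ up to the boundary twist, so it is a finite-order mapping class of the torus with one boundary/puncture. In particular, $f^2|_\Sigma$ is periodic and not pseudo-Anosov. The component $\Sigma'$ is in the same orbit, so its first-return map is conjugate to that on $\Sigma$ and is also periodic.

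Also, $C$ is a canonical-type reduction system here: every reduction system consists of separating curves, and the analysis shows that the pieces are one-holed tori or an annulus. So every piece of the Nielsen--Thurston decomposition has periodic first return, and $f$ has no pseudo-Anosov component. This contradicts the assumption and finishes the proof.

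The main obstacle is the step from ``this particular $C$ has periodic pieces'' to ``$f$ has no pseudo-Anosov component.'' Lemma~\ref{lemma:orbits_for_reducible_mappings_with_irreducible_char_polynomial} only produces a minimal nonempty $C$, while pseudo-Anosov pieces are defined relative to the canonical reduction system. I would handle this by noting that in genus $2$, any reduction system consisting only of separating curves (Lemma~\ref{lemma:irreducible_polynomial--only_separating_curves}) has at most one curve up to isotopy. Two disjoint non-isotopic separating essential curves cannot exist in $S_2$. Hence the canonical reduction system equals $C$, and the Nielsen--Thurston pieces are exactly $\Sigma$ and $\Sigma'$. In a one-holed torus, a pseudo-Anosov first-return map has $|\operatorname{tr}|>2$, whereas $\operatorname{tr} B=1$.
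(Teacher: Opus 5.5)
Your proposal is correct and follows essentially the paper's argument: reduce to a single separating curve whose two one-holed-torus sides are exchanged by $f$, then observe that the homological first-return action $B$ has the squares of the roots of $\varphi_{12}$ as eigenvalues (your $\chi_B=\varphi_6$, $\operatorname{tr}B=1$), which is incompatible with the hyperbolic action of a pseudo-Anosov on $S_{1,1}$. The differences are minor: you eliminate the star case of the orbit lemma by counting genus where the paper uses the $2g-3$ bound on disjoint separating curves, and you make explicit the canonical-reduction-system point that the paper passes over.
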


	\begin{proof}
		The construction of a non-periodic algebraically finite type mapping class $f\in\Mod(S_2)$ with $\chi_f(x)=\varphi_{12}(x)$ is given in Example~\ref{example:genus_2}.
				
		Suppose that $f$ is reducible with a pseudo-Anosov component and $\chi_f(x)=\varphi_{12}(x)$. Since $\varphi_{12}(x)$ is irreducible, Lemma~\ref{lemma:irreducible_polynomial--only_separating_curves} implies that every curve in any reduction system for $f$ is separating. The maximal number of pairwise disjoint, pairwise non-isotopic separating simple closed curves in $S_g$ is $2g-3$, so in genus $2$ the reduction system consists of a single curve $c$, as in Figure~\ref{figure:s2_with_separating_curve}.

        \begin{figure}[t]
\centering
\begin{tikzpicture}[
  scale=0.9,
  line cap=round,
  line join=round,
  >=Stealth,
  every node/.style={font=\small}
]
  
  \path[draw=black, fill=gray!8, line width=0.75pt]
    (-4.55,0)
      .. controls (-4.55,1.05) and (-3.58,1.55) .. (-2.67,1.27)
      .. controls (-2.05,1.08) and (-1.58,0.53) .. (-0.86,0.48)
      .. controls (-0.52,0.46) and (-0.34,0.60) .. (0,0.60)
      .. controls (0.34,0.60) and (0.52,0.46) .. (0.86,0.48)
      .. controls (1.58,0.53) and (2.05,1.08) .. (2.67,1.27)
      .. controls (3.58,1.55) and (4.55,1.05) .. (4.55,0)
      .. controls (4.55,-1.05) and (3.58,-1.55) .. (2.67,-1.27)
      .. controls (2.05,-1.08) and (1.58,-0.53) .. (0.86,-0.48)
      .. controls (0.52,-0.46) and (0.34,-0.60) .. (0,-0.60)
      .. controls (-0.34,-0.60) and (-0.52,-0.46) .. (-0.86,-0.48)
      .. controls (-1.58,-0.53) and (-2.05,-1.08) .. (-2.67,-1.27)
      .. controls (-3.58,-1.55) and (-4.55,-1.05) .. (-4.55,0)
    -- cycle;

  \path[draw=black, fill=white, line width=0.75pt]
    (-3.55,0)
      .. controls (-3.08,0.53) and (-2.22,0.53) .. (-1.75,0)
      .. controls (-2.22,-0.53) and (-3.08,-0.53) .. (-3.55,0)
    -- cycle;

  \path[draw=black, fill=white, line width=0.75pt]
    (1.75,0)
      .. controls (2.22,0.53) and (3.08,0.53) .. (3.55,0)
      .. controls (3.08,-0.53) and (2.22,-0.53) .. (1.75,0)
    -- cycle;

	\draw[densely dashed,line width=0.8pt]
	(0,0.60) arc[start angle=90,end angle=270,x radius=0.28,y radius=0.60];
	\draw[solid,line width=0.8pt] 
	(0,-0.60) arc[start angle=-90,end angle=90,x radius=0.28,y radius=0.60];

  \node at (-2.55,0.88) {$\Sigma$};
  \node at ( 2.55,0.88) {$\Sigma'$};
  \node at (0,-1.08) {$c$};
  \node at (0,1.92) {$S=S_2$};

  \draw[<->, line width=0.9pt]
    (-2.15,1.72) to[out=35,in=145] node[above=2pt] {$f$} (2.15,1.72);
\end{tikzpicture}
\caption{A genus two surface cut along a separating curve \(c\), with \(f\) exchanging the two complementary components.}
\label{figure:s2_with_separating_curve}
\end{figure}

		The mapping class $f$ must exchange the two components of $S_2\setminus c$; otherwise the homology of each component would give a proper invariant subspace of $H_1(S_2;\Q)$, contradicting the irreducibility of $\chi_f(x)$. 
        Let $\Sigma$ and $\Sigma'$ be the closures of these components; $f^2$ preserves each of them separately. By assumption, and after possibly interchanging $\Sigma$ and $\Sigma'$, we may suppose $f^2$ is pseudo-Anosov on $\Sigma=S_{1,1}$.
        The homological action of $\Mod(S_{1,1})$ on $H_1(S_{1,1};\Z)$ lies in $SL(2,\Z)$, and a pseudo-Anosov first-return map induces a hyperbolic element. Thus $(f^2|_\Sigma)_*$ is conjugate over $\R$ to
		$$
		A_\lambda = \begin{bmatrix}
			\lambda & 0 \cr
			0 & \lambda^{-1}
		\end{bmatrix},
		$$
		where $\lambda\in\R$ and $|\lambda|>1$. Hence $f^2_{*}$ is conjugate to $A_\lambda\oplus B$, for a suitable real number $\lambda\neq\pm 1$ and a matrix $B\in SL(2,\Z)$
        induced by $f^2|_{\Sigma'}$. 
		
		Note that
		$$
		f^2|_{\Sigma'} = f|_{\Sigma} \circ f^2|_{\Sigma} \circ (f|_{\Sigma})^{-1},
		$$
		where $f|_{\Sigma} \colon \Sigma \longrightarrow \Sigma'$, so $f^2|_{\Sigma}$ and $f^2|_{\Sigma'}$ are conjugate, and so they are both pseudo-Anosov. In particular, their homological actions $A_\lambda$ and $B$ are conjugate.

        Therefore, $f^2_{*}$ has four real eigenvalues which are distinct from $\pm 1$. On the other hand, if $\zeta_{12}$ is a primitive root of unity of order $12$, then the squares of the roots of $\varphi_{12}$ are non-real: $\zeta_{12}^2=\zeta_6$, $(\zeta_{12}^5)^2=\zeta_6^5$, $(\zeta_{12}^7)^2=\zeta_6$, and $(\zeta_{12}^{11})^2=\zeta_6^5$. This is a contradiction.
		\end{proof}

\begin{lemma}\label{lemma:reducible_mapping_and_irreducible_polynomial_r(x^k)}
	Let $f\in\Mod(S_g)$ be such that $\chi_f(x)$ is symplectically irreducible over $\Z$. If $f$ is reducible, then $\chi_f(x)=r(x^k)$ for some symplectic polynomial $r(x)$ and some $k>1$.
\end{lemma}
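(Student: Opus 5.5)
The plan is to apply Lemma~\ref{lemma:orbits_for_reducible_mappings_with_irreducible_char_polynomial}, which gives a reduction system $C$ in one of two normal forms, and then compute $\chi_f$ directly from the resulting decomposition of homology, without assuming that $f$ is of algebraically finite type. In both cases the positive-genus components form a single $f$-orbit of length $k\geq 2$: $k=2$ in the first case and $k\geq 3$ in the second. The point is to show that $f_*$ permutes $k$ symplectic summands cyclically, and then to use formula~\eqref{formula:char_pol_for_block_cyclic_matrix}.

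First, since all curves of $C$ are separating, there is the orthogonal symplectic decomposition
\[
H_1(S_g;\Z)=\bigoplus_{j=0}^{k-1} G(\Sigma_j)
\]
used in the proof of Lemma~\ref{lemma:orbits_for_reducible_mappings_with_irreducible_char_polynomial}. Here $\Sigma_j=f^j(\Sigma)$ are the positive-genus components, and $G(\Sigma_j)$ is the genus part of $H_1(\Sigma_j)$. Over $\Z$ this holds because each $\Sigma_j$ with its boundary capped off is a closed subsurface whose homology embeds as a unimodular summand. In the star case the central sphere $S_{0,k}$ contributes no homology modulo the boundary classes, and those classes are null-homologous in $S_g$. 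The map $f_*$ sends $G(\Sigma_j)$ to $G(\Sigma_{j+1})$, with indices mod $k$.

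Formula~\eqref{formula:char_pol_for_block_cyclic_matrix} then gives
\[
\chi_f(x)=r(x^k),\qquad r(x):=\chi_{f^k_*|_{G(\Sigma)}}(x).
\]
Next we check that $r$ is symplectic. The map $f^k$ restricts to a homeomorphism of $\Sigma$, possibly after isotopy fixing the boundary setwise. Capping the boundary of $\Sigma$ gives a closed surface $\hat\Sigma$ of genus $h=g/k\geq 1$, and $f^k_*|_{G(\Sigma)}$ is identified with the induced action on $H_1(\hat\Sigma;\Z)$, which lies in $\Sp(2h,\Z)$. Its characteristic polynomial is therefore monic of degree $2h$, reciprocal, and has constant term $1$, so $r$ is symplectic.

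The step needing the most care is the identification of $G(\Sigma_j)$ with integral unimodular summands that $f_*$ permutes exactly (not merely up to finite index), since $\chi_f$ is computed over $\Z$. This is handled by the fact that separating curves are null-homologous, so Mayer--Vietoris gives $H_1(S_g)\cong\bigoplus_j H_1(\hat\Sigma_j)$ naturally. Alternatively, it suffices to work over $\Q$, since characteristic polynomials do not change under extension of scalars; the summand $G(\Sigma)$ is then still defined over $\Z$ by $H_1(\hat\Sigma;\Z)$, which keeps $r$ integral. Finally, $k>1$ holds in both cases, which completes the argument.
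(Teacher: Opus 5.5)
Your proposal is correct and follows the paper's proof. Both arguments invoke Lemma~\ref{lemma:orbits_for_reducible_mappings_with_irreducible_char_polynomial}, split $H_1(S_g)$ into the genus summands of the $k$ cyclically permuted positive-genus components ($k=2$ or $k\geq 3$), apply~\eqref{formula:char_pol_for_block_cyclic_matrix}, and observe that $r$ is the characteristic polynomial of the first-return action on a one-boundary-component surface, hence symplectic. Your extra care about the integral unimodular splitting only makes explicit what the paper leaves implicit.
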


\begin{proof}
Choose a reduction system as in Lemma~\ref{lemma:orbits_for_reducible_mappings_with_irreducible_char_polynomial}. In the first case, $f$ exchanges the two positive-genus components cut off by the unique separating curve. The first return map to either component is $f^2|_\Sigma$, and the block-cyclic form of the action on
$H_1(S_g;\Q)\cong H_1(\Sigma;\Q)\oplus H_1(f(\Sigma);\Q)$ gives
\[
\chi_f(x)=\chi_{(f^2|_\Sigma)_{*}}(x^2).
\]
Thus $\chi_f(x)=r(x^2)$.

In the second case, $f$ cyclically permutes $k\geq3$ homeomorphic positive-genus components, while the central sphere contributes no homology to the direct-sum decomposition of $H_1(S_g;\Q)$. Applying the same block-cyclic computation to the $k$ positive-genus summands gives
\[
\chi_f(x)=\chi_{(f^k|_\Sigma)_{*}}(x^k).
\]
In both cases the polynomial $r$ is the characteristic polynomial of the homological action of a mapping class on a compact surface with one boundary component, and is therefore symplectic.
\end{proof}

Recall that
$$
\varphi_n(x) = \varphi_{\rad(n)}(x^{n/\rad(n)}),
$$ 
where here $\rad(n)$ is the \emph{radical} of $n$, the product of all its distinct prime divisors. One can check that $\varphi_n(x)$ is not of the form $r(x^k)$ for some $k>1$ if and only if $n$ is square-free, equivalently $n=\rad(n)$.

The following elementary construction is the topological source of the substitutions $x\mapsto x^m$ which appear below.

\begin{lemma}[Cyclic substitution construction]\label{lemma:cyclic_substitution_construction}
Let $\Sigma$ be a compact orientable surface with one boundary component, and let $h$ be a mapping class of $\Sigma$. Put $q(x)=\chi_h(x)$. For every $m\geq 1$, there is a mapping class $F$ of the closed surface of genus $m\cdot\operatorname{genus}(\Sigma)$ such that
\[
\chi_F(x)=q(x^m).
\]
If $m\geq 2$, the mapping class $F$ is reducible. If, moreover, $h$ is periodic of order $n$, then $F$ may be chosen to be of algebraically finite type such that $F_*$ has finite order $mn$.
\end{lemma}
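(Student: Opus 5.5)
The plan is to build $F$ by gluing $m$ copies of $\Sigma$ to a sphere with $m$ boundary components, $S_{0,m}$, and to let $F$ rotate the copies cyclically. For $m=1$ we simply cap off $\partial\Sigma$ with a disk and extend $h$ over that disk. This works because a mapping class of a surface with one boundary component may be represented by a homeomorphism that restricts to the identity on $\partial\Sigma$. The resulting closed surface $\hat\Sigma$ has the same genus as $\Sigma$. Moreover, $H_1(\Sigma)\to H_1(\hat\Sigma)$ is an isomorphism, since the boundary class of $\Sigma$ is already null-homologous, so $\chi_F=q$.

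For $m\geq 2$ (with $m=2$ replacing $S_{0,2}$ by a single separating annulus), we use $S_{0,m}$ when $m\geq3$. Let $\rho$ be the rotation of $S_{0,m}$ of order $m$ that permutes its boundary circles cyclically. Glue copies $\Sigma_1,\dots,\Sigma_m$ of $\Sigma$ along these circles, obtaining a closed surface $S$ of genus $m\cdot\operatorname{genus}(\Sigma)$. Fix identifications $\iota_j\colon\Sigma\to\Sigma_j$ that are compatible with $\rho$ on the boundaries. Define
\[
F=\rho \text{ on } S_{0,m},\qquad F=\iota_{j+1}\circ\iota_j^{-1} \text{ on } \Sigma_j \text{ for } j<m,\qquad F=\iota_1\circ \tilde h\circ\iota_m^{-1} \text{ on } \Sigma_m,
\]
where $\tilde h$ is a representative of $h$ that fixes $\partial\Sigma$ pointwise. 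These pieces agree on the gluing circles, because $\rho^m=\id$ and $\tilde h|_{\partial\Sigma}=\id$. The first-return map of $F$ to $\Sigma_1$ is conjugate to $\tilde h$.

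By the direct-sum decomposition used in Lemma~\ref{lemma:orbits_for_reducible_mappings_with_irreducible_char_polynomial}, $H_1(S;\Q)\cong\bigoplus_j G(\Sigma_j)$, and $F_*$ permutes these summands cyclically. Formula~\eqref{formula:char_pol_for_block_cyclic_matrix} then gives $\chi_F(x)=\chi_{F^m_*|G(\Sigma_1)}(x^m)=q(x^m)$, because $G(\Sigma_1)\cong H_1(\Sigma)/\langle[\partial\Sigma]\rangle=H_1(\Sigma)$. Alternatively, one can read the same conclusion off Nielsen's formula~\eqref{formula:Nielsen_formula_psi}. The reduction system $\{\partial\Sigma_j\}$ is $F$-invariant and consists of essential curves, which shows that $F$ is reducible. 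Essentiality holds whenever $\Sigma$ has positive genus. If the genus of $\Sigma$ is $0$, the statement is degenerate, since then $q=1$ and $S=S^2$, and we exclude or treat that case trivially.

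For the periodic case, let $h$ be periodic of order $n$. We would like $\tilde h$ to fix $\partial\Sigma$ pointwise, but a finite-order representative generally only rotates $\partial\Sigma$. This compatibility at the boundary is the main obstacle. We expect to handle it as follows. Choose a finite-order representative $h_0$ of order $n$ that preserves $\partial\Sigma$. Modify it inside a collar $\partial\Sigma\times[0,1]$ so that it becomes the identity on $\partial\Sigma$; this changes $h_0$ only by a fractional twist in the collar. The resulting map is periodic on $\Sigma$ minus the collar and has the collar as an annulus piece. Hence $F$ is of algebraically finite type, with reduction system the curves $\partial\Sigma_j$ together with the inner collar curves. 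The first-return maps are periodic on all complementary pieces, and on the annuli they are (possibly twisted) non-amphidrome maps.

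Finally, $F^m_*$ acts on each summand $G(\Sigma_j)$ by a conjugate of $h_*$, and the collar twist is trivial in homology on $G(\Sigma_j)$. It follows that $F^{mn}_*=\id$ and that no smaller power is the identity. Indeed, $F^k_*$ for $m\nmid k$ moves the summands, and $(F^{mk'})_*=\id$ forces $h_*^{k'}=\id$. Here we interpret the order $n$ as the order of $h_*$, which coincides with the order of $h$ for a faithful periodic action when $\operatorname{genus}(\Sigma)\geq1$; this faithfulness is an assumption we still need to verify. Therefore $F_*$ has order exactly $mn$.
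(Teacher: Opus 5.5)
Your proposal is correct and is essentially the paper's construction: $m$ copies of $\Sigma$ glued to $S_{0,m}$ and permuted cyclically with $h$ inserted once, then the block-cyclic formula and $F_*^m=\operatorname{diag}(h_*,\ldots,h_*)$. You go beyond the paper in making the boundary matching explicit (a boundary-fixing representative, and a fractional twist in a collar in the periodic case), which the paper covers only by ``choose the gluing data and a representative of $h$''. Two small corrections: $\partial\Sigma_j$ and the inner collar curve are parallel, so keep only one curve per collar in your reduction system, and for $m=2$ the central annulus is amphidrome rather than non-amphidrome.

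The faithfulness you leave open is also used tacitly in the paper (``since $h_*$ has order $n$''), and it holds when $\operatorname{genus}(\Sigma)\geq 1$. Coning off the boundary rotation of $h_0$ gives a periodic homeomorphism $\hat h_0$ of the closed surface $S_g$ with a fixed point. If some $\hat h_0^k\neq\id$ acted trivially on $H_1$, its Lefschetz number would be $2-2g\leq 0$, contradicting the fact that it has a nonempty set of isolated fixed points, each of index $+1$.
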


\begin{proof}
Let $\Sigma_1,\ldots,\Sigma_m$ be $m$ copies of $\Sigma$, and attach their boundary components to the boundary components of a genus-zero surface $S_{0,m}$. The resulting closed surface $S$ has genus $m\cdot\operatorname{genus}(\Sigma)$. Choose the gluing data and a representative of $h$ so that the following cyclic construction is well-defined: $F$ sends $\Sigma_i$ to $\Sigma_{i+1}$ for $1\leq i<m$, sends $\Sigma_m$ to $\Sigma_1$ by $h$, and acts on the central genus-zero component by the corresponding cyclic permutation of its boundary components. Thus $F^m|_{\Sigma_1}$ is $h$, up to the chosen identifications.

The central genus-zero component contributes no first homology to $S$. Hence
\[
H_1(S;\Q)\cong \bigoplus_{i=1}^m H_1(\Sigma_i;\Q),
\]
and, with respect to this splitting, $F_*$ has block-cyclic form. Applying formula~\eqref{formula:char_pol_for_block_cyclic_matrix} gives
\[
\chi_F(x)=\chi_{(F^m|_{\Sigma_1})_*}(x^m)=\chi_{h_*}(x^m)=q(x^m).
\]
For $m\geq 2$ the multicurve separating the positive-genus pieces from the central genus-zero component, or the core curve of the central annulus when $m=2$, is invariant, so $F$ is reducible. If $h$ is periodic, then the first-return maps on all complementary components are periodic; hence $F$ is of algebraically finite type.

With respect to the decomposition $H_1(S;\Q)\cong \bigoplus_{i=1}^m H_1(\Sigma_i;\Q)$, the map $F_*$ is represented by the block companion matrix
$$
F_* = 
\begin{pmatrix}
	0 & 0 & \cdots & 0 & h_*\\
	I & 0 & \cdots & 0 & 0\\
	0 & I & \cdots & 0 & 0\\
	\vdots & & \ddots & & \vdots\\
	0 & 0 & \cdots & I & 0
\end{pmatrix}.
$$
Thus $F_*^m=\operatorname{diag}(h_*,\ldots,h_*)$, so $F_*$ has order $mn$, since $h_*$ has order $n$.
\end{proof}

\begin{theorem}\label{theorem:classification_of_cyclotomic_pol_for_algebraically_finite_type}
Let $n\geq 3$. Then:
\begin{itemize}
	\item[(1)] $\varphi_n(x)$ is realizable as $\chi_f(x)$ for $f\in \Mod(S)$ of algebraically finite type if and only if $n$ has at most two distinct prime factors. Moreover, $f$ can be taken so that $f_*$ is of order $n$.
	
	\item[(2)] If $\chi_f(x)=\varphi_n(x)$ for algebraically finite type $f\in\Mod(S_g)$ and $n$ is equal to $4$, to a prime $p$, or to a product $pq$ of two distinct primes, then $f$ is periodic.
	\item[(3)] If $n$ is square-free and has at least three distinct prime factors and $\chi_f(x) = \varphi_n(x)$ for $f\in \Mod(S)$, then $f$ is pseudo-Anosov.
\end{itemize}
\end{theorem}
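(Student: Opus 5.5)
The plan is to prove the three parts separately. Part (1) splits into a constructive direction, built from periodic models and Lemma~\ref{lemma:cyclic_substitution_construction}, and a counting direction, using Lemma~\ref{lemma:aft_irreducible_at_most_three_numerators}. Parts (2) and (3) both come from Lemma~\ref{lemma:reducible_mapping_and_irreducible_polynomial_r(x^k)} together with the observation that $\varphi_n$ is not a polynomial in $x^k$ for $k>1$ when $n$ is square-free.

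\emph{Part (1), the ``if'' direction.} Write $n=\rad(n)\cdot m$, so that $\varphi_n(x)=\varphi_{\rad(n)}(x^{m})$.
\begin{enumerate}[(i)]
\item The base cases are $\varphi_p$ for $p$ an odd prime, $\varphi_4$, and $\varphi_{pq}$ for distinct primes $p,q$ (allowing $q=2$). A power of $2$ is handled through $\varphi_{2^a}(x)=\varphi_4(x^{2^{a-2}})$ for $a\ge 2$.
\item Each base case is realized by a periodic map $h_0$ of a closed surface that has a fixed point:
\begin{itemize}
\item for $\varphi_p$, the center of the rotated $(4g+2)$-gon with $g=(p-1)/2$;
\item for $\varphi_4$, the order-$4$ rotation of the square torus;
\item for $\varphi_{pq}$, the unique point over $\infty$ in Example~\ref{example:periodic_of_order_pq}, where $m_\infty=1$.
\end{itemize}
\item Remove a small $h_0$-invariant disk around that fixed point. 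This gives a periodic map $h$ of a surface $\Sigma$ with one boundary component. Since removing a disk does not change $H_1$, we still have $\chi_h=\chi_{h_0}$, and $h_*$ has order $\rad(n)$ (respectively $4$).
\item Apply Lemma~\ref{lemma:cyclic_substitution_construction} with $m=n/\rad(n)$ (respectively $m=2^{a-2}$). This yields an algebraically finite type $F$ with $\chi_F=\varphi_n$, and $F_*$ has order $m\cdot\rad(n)=n$. When $m=1$ we simply take the periodic $h_0$ itself.
\end{enumerate}

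\emph{Part (1), the ``only if'' direction.} Suppose $f$ is of algebraically finite type and $\chi_f=\varphi_n$, where $n$ has $r\ge3$ distinct prime factors.
\begin{enumerate}[(i)]
\item For $n\ge3$, $\varphi_n$ is irreducible and symplectic, so Lemma~\ref{lemma:aft_irreducible_at_most_three_numerators} applies. Its degree is at least $\varphi(30)=8$, so $g\ge2$, as the earlier lemmas require.
\item By uniqueness of the rational product form, the expression in Lemma~\ref{lemma:aft_irreducible_at_most_three_numerators} must coincide with \eqref{formula:cyclotomic_polynomial}.
\item The numerator indices of \eqref{formula:cyclotomic_polynomial} are the $d\mid n$ with $\mu(n/d)=1$. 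Equivalently, $n/d$ runs over square-free divisors of $\rad(n)$ with an even number of prime factors. There are exactly $2^{r-1}\ge4$ of these, which exceeds the bound of three and gives the contradiction.
\end{enumerate}

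\emph{Part (2).} Let $n\in\{4,p,pq\}$ and let $f$ be of algebraically finite type with $\chi_f=\varphi_n$. Suppose $f$ were reducible.
\begin{enumerate}[(i)]
\item If $g\ge2$, Lemma~\ref{lemma:reducible_mapping_and_irreducible_polynomial_r(x^k)} gives $\varphi_n(x)=r(x^k)$ with $r$ symplectic and $k>1$.
\item For square-free $n$ this is impossible, by the remark that $\varphi_n$ is a polynomial in $x^k$ with $k>1$ only when $n\ne\rad(n)$.
\item For $n=4$ we have $g=1$, so the lemma is not available. Instead, a reducible torus class preserves a nonseparating curve up to sign, so $\pm1$ is an eigenvalue. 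This is incompatible with $x^2+1$. (Alternatively, $x^2+1=r(x^2)$ would force $\deg r=1$, which is odd and so not symplectic.)
\end{enumerate}
Hence $f$ is periodic.

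\emph{Part (3).} Let $n$ be square-free with at least three prime factors, so $g\ge4$.
\begin{enumerate}[(i)]
\item By the ``only if'' direction of (1), $f$ is not of algebraically finite type.
\item If $f$ were reducible with a pseudo-Anosov component, Lemma~\ref{lemma:reducible_mapping_and_irreducible_polynomial_r(x^k)} would again express $\varphi_n(x)$ as $r(x^k)$ with $k>1$, which is impossible because $n$ is square-free.
\item By the Nielsen--Thurston classification, $f$ is pseudo-Anosov.
\end{enumerate}

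The main point needing care is the constructive step. Lemma~\ref{lemma:cyclic_substitution_construction} requires a surface with \emph{one} boundary component, so each base periodic model must have a genuine fixed point whose deletion leaves $\chi$ unchanged, and the order of $h_*$ must be exactly $\rad(n)$ so that $F_*$ has order $n$. The counting in the ``only if'' direction is routine once uniqueness of the product form is invoked.
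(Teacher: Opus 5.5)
Your proposal is correct and follows the paper's proof essentially step by step. You use the same periodic base models with a fixed point (for $\varphi_p$, $\varphi_4$, $\varphi_{pq}$) fed into Lemma~\ref{lemma:cyclic_substitution_construction}, and the same contradiction with Lemma~\ref{lemma:aft_irreducible_at_most_three_numerators}; your exact count $2^{r-1}$ of numerator indices simply refines the paper's four explicit indices $n$, $n/(qr)$, $n/(pr)$, $n/(pq)$. Parts (2) and (3) rest on Lemma~\ref{lemma:reducible_mapping_and_irreducible_polynomial_r(x^k)} just as in the paper.

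Your tracking of the hypothesis $g\geq 2$ is more careful than the paper's. However, $n=3$ and $n=6$ also give genus $1$, so your torus eigenvalue argument (no root $\pm1$) should be invoked for them as well, not only for $n=4$.
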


Part~(1) is precisely Theorem~\ref{theorem:intro_cyclotomic_classification} from the introduction.

\begin{proof}
		Assume first that $n$ has at most two distinct prime factors. We realize the required cyclotomic polynomial by reducing to the basic periodic cases and then applying Lemma~\ref{lemma:cyclic_substitution_construction}.
		
		For an odd prime $p$, the periodic order-$p$ model on $S_{(p-1)/2}$ can be chosen with a fixed point; deleting an invariant disk about this point gives a periodic mapping $h_p$ of $S_{(p-1)/2,1}$ with $\chi_{h_p}(x)=\varphi_p(x)$. For $p=2$, we instead use a periodic order-$4$ mapping $h_4$ of $S_{1,1}$ with $\chi_{h_4}(x)=\varphi_4(x)$. Finally, if $p\neq q$ are distinct primes, the deck transformation in Example~\ref{example:periodic_of_order_pq} fixes the point lying over $\infty$; deleting an invariant disk about this point gives a periodic mapping $h_{pq}$ of a one-boundary-component surface with $\chi_{h_{pq}}(x)=\varphi_{pq}(x)$.
		
		Now write $n$ in one of the following forms. If $n=p^a$ with $p$ odd, then
		\[
		\varphi_n(x)=\varphi_p(x^{p^{a-1}}),
		\]
		so Lemma~\ref{lemma:cyclic_substitution_construction}, applied to $h_p$ with $m=p^{a-1}$, gives an algebraically finite type mapping class with characteristic polynomial $\varphi_{p^a}(x)$. If $n=2^a$ with $a\geq2$, then
		\[
		\varphi_n(x)=\varphi_4(x^{2^{a-2}}),
		\]
		and the same construction applied to $h_4$ gives the desired realization. If $n=p^a q^b$ for distinct primes $p$ and $q$, then
		\[
		\varphi_n(x)=\varphi_{pq}(x^{p^{a-1}q^{b-1}})=\varphi_{pq}(x^{n/(pq)}),
		\]
		and applying the same construction to $h_{pq}$ gives an algebraically finite type mapping class realizing $\varphi_n(x)$. This proves the positive direction of (1). Moreover, in all cases the construction in Lemma~\ref{lemma:cyclic_substitution_construction} gives $f_*$ of the desired order $n$.
		
		Now suppose that $n$ has at least three distinct prime factors. Choose three of them and call them $p,q,r$, and write $n=pqrd$. In the unique expression~\eqref{formula:cyclotomic_polynomial}, the numerator indices include
		\[
		n,\qquad pd=\frac{n}{qr},\qquad qd=\frac{n}{pr},\qquad rd=\frac{n}{pq}.
		\]
		Indeed, these four indices are distinct, and
		\[
		\mu(n/n)=\mu(1)=1,
		\qquad
		\mu\bigl(n/(pd)\bigr)=\mu(qr)=1,
		\]
		with the same computation giving $\mu(n/(qd))=\mu(pr)=1$ and $\mu(n/(rd))=\mu(pq)=1$. Thus $\varphi_n(x)$ has at least four numerator indices in its unique rational product expression by factors of the form $x^k-1$.
		
		If, on the other hand, $\varphi_n(x)=\chi_f(x)$ for some algebraically finite type mapping class $f$, then $\chi_f(x)$ is irreducible over $\Z$, since $\varphi_n(x)$ is irreducible. Lemma~\ref{lemma:aft_irreducible_at_most_three_numerators} would then imply that the same unique rational product expression has at most three numerator indices. This contradiction proves the negative direction of (1).

		For (2), suppose that $f\in\Mod(S)$ is of algebraically finite type, $\chi_f(x)=\varphi_n(x)$, and $n$ is equal to $4$, to a prime $p$, or to a product $pq$ of two distinct primes. If $f$ is reducible, then by Lemma~\ref{lemma:reducible_mapping_and_irreducible_polynomial_r(x^k)} $\varphi_n(x) = r(x^k)$ for a symplectic polynomial $r(x)$ and $k>1$. In fact, $r(x)$ is a cyclotomic polynomial and so $n\neq \rad(n)$, which implies $n=4$. In that case, $r(x) = \varphi_2(x)$ which is not symplectic, a contradiction. Therefore $f$ is not reducible, so periodic.
		
		Finally, suppose $n$ is square-free and has at least three distinct prime factors and $\chi_f(x) = \varphi_n(x)$ for $f\in \Mod(S)$. By Lemma~\ref{lemma:reducible_mapping_and_irreducible_polynomial_r(x^k)} and the subsequent remark, $f$ is not reducible since $\chi_f(x) = \varphi_n(x)$ is not of the form $r(x^k)$ for $k>1$. By (1), $f$ is not of algebraically finite type, so it is not periodic. Therefore $f$ is pseudo-Anosov, which proves (3).
\end{proof}

\begin{example}
	For $g=3$, the values $n \in \{7,9,14,18\}$ are such that $6=\varphi(n)$, where $\varphi$ denotes Euler's totient function. By Theorem~\ref{theorem:classification_of_cyclotomic_pol_for_algebraically_finite_type}, all the corresponding $\varphi_n(x)$ are realized by algebraically finite type mapping classes.
\end{example}

\begin{corollary}\label{corollary:not_surjective_infinitely_many_genera}
	There are infinitely many genera $g$ for which there exists a polynomial $\varphi_n(x)$ with $2g=\varphi(n)$ that is not realizable by a mapping class of algebraically finite type; in particular, the corresponding characteristic polynomial is not realizable by a Morse--Smale diffeomorphism of $S_g$.
	
	Thus, for infinitely many such $g$, the restricted map
	\[
	\Psi|_{\AFT(S_g)}\colon \AFT(S_g)\to \QU(2g,\Z)\cap \Sp(2g,\Z)
	\]
	is not even surjective at the level of characteristic polynomials.
\end{corollary}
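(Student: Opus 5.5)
The statement is a direct consequence of Theorem~\ref{theorem:classification_of_cyclotomic_pol_for_algebraically_finite_type}(1), so the whole plan is to exhibit infinitely many integers $n$ having at least three distinct prime factors, set $g=\varphi(n)/2$, and check that these genera form an infinite set.

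First, I will fix the family. Take $n=2\cdot 3\cdot p$, or more simply $n=30p^{a}$ with $a\geq 0$ and $p$ a prime different from $2,3,5$. Any family of integers with at least three distinct prime divisors works, as long as $\varphi(n)$ is unbounded along it. I will use $n_k=30\cdot 7^k$, where $\varphi(n_k)=8\cdot 6\cdot 7^{k-1}$ for $k\geq 1$ and $\varphi(30)=8$. Since $n_k\geq 3$, $\varphi(n_k)$ is even, so $g_k=\varphi(n_k)/2$ is an integer. These values strictly increase in $k$, so the genera $g_k$ are pairwise distinct and there are infinitely many of them. The first one, $g_0=4$ from $n=30$, matches the example mentioned in the introduction.

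Second, I will check that $\varphi_{n_k}(x)$ is a valid element of the target. It is monic of degree $2g_k$ and reciprocal for $n_k\geq 3$, so it is symplectic. By the surjectivity of $\Psi$, or simply via its companion-type symplectic realization, there is a matrix in $\Sp(2g_k,\Z)$ with this characteristic polynomial. All its eigenvalues are roots of unity, so the matrix lies in $\QU(2g_k,\Z)\cap\Sp(2g_k,\Z)$. Each $n_k$ has the three distinct primes $2,3,5$, so Theorem~\ref{theorem:classification_of_cyclotomic_pol_for_algebraically_finite_type}(1) says $\varphi_{n_k}$ is not $\chi_f$ for any $f\in\AFT(S_{g_k})$. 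Hence no matrix in the image of $\Psi|_{\AFT(S_{g_k})}$ has characteristic polynomial $\varphi_{n_k}$, which gives non-surjectivity even at the level of characteristic polynomials.

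Finally, the Morse--Smale statement follows from da Rocha's theorem cited in the introduction. A mapping class contains a Morse--Smale diffeomorphism if and only if it is of algebraically finite type, so a Morse--Smale diffeomorphism realizing $\varphi_{n_k}$ would give an $\AFT$ class realizing it, contradicting the previous step.

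I do not expect a real obstacle. The one point needing care is that the target set really contains a matrix with characteristic polynomial $\varphi_{n_k}$. This is exactly why I invoke the surjectivity of $\Psi$ together with the existence of some mapping class (for instance a pseudo-Anosov one, by Theorem~\ref{theorem:classification_of_cyclotomic_pol_for_algebraically_finite_type}(3) applied to square-free $n$) realizing each symplectic polynomial.
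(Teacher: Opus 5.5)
Your argument is correct and is essentially the paper's. The paper uses the family $n=6p$ with $p>3$ prime, giving genera $g=p-1$, instead of your $n_k=30\cdot 7^k$; it then applies Theorem~\ref{theorem:classification_of_cyclotomic_pol_for_algebraically_finite_type}(1) and invokes da Rocha's theorem for the Morse--Smale statement, exactly as you do.

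One small correction to your last paragraph: part~(3) of that theorem is not an existence statement, and it does not even apply to your non-square-free $n_k$ with $k\geq 2$. A matrix in $\Sp(2g_k,\Z)$ with characteristic polynomial $\varphi_{n_k}$ exists because every symplectic polynomial is realized in $\Sp(2g,\Z)$, the standard fact cited in Section~5, not because of part~(3).
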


\begin{proof}
Let $p>3$ be prime and set $n=6p$. On the one hand, $n$ has three distinct prime divisors, so Theorem~\ref{theorem:classification_of_cyclotomic_pol_for_algebraically_finite_type} implies that $\varphi_n(x)$ is not realized by an algebraically finite type mapping class. On the other hand,
\[
\deg \varphi_{6p}=\varphi(6p)=2(p-1),
\]
so $\varphi_{6p}$ has the correct degree to be the characteristic polynomial of an element of $\Sp(2(p-1),\Z)$. Since there are infinitely many primes $p>3$, this gives infinitely many genera of the form $g=p-1$. The statement for Morse--Smale diffeomorphisms follows from da Rocha's characterization \cite{Rocha}, which says that a mapping class contains a Morse--Smale diffeomorphism if and only if it is of algebraically finite type.
\end{proof}

\begin{example}
	The number $n=2\cdot 3\cdot 5=30$ is the least such that $\varphi_n(x)$ is not realizable as $\chi_f(x)$ for $f\in \Mod(S)$ of algebraically finite type, and in fact is only realized by pseudo-Anosov mapping classes by Theorem~\ref{theorem:classification_of_cyclotomic_pol_for_algebraically_finite_type}. The corresponding surface has genus $g= \varphi(30)/2 = 4$, so this is the first possible genus in which the restriction of $\Psi$ to algebraically finite type mapping classes can fail to be surjective at the level of characteristic polynomials.
\end{example}

\begin{corollary}\label{corollary:realization_of_product_of_cyclotomic_polynomials}
	Let 
	$$
	p(x) = (x-1)^{2k_1} (x+1)^{2k_2}\prod_{i=3}^k \varphi_{n_i}(x)
	$$
	be a product of cyclotomic polynomials such that $k_1,k_2\geq 0$ and every $n_i\geq 3$ has at most two distinct prime factors. Then $p(x)$ is realizable as $\chi_f(x)$ for $f\in \Mod(S)$ of algebraically finite type such that $f_*$ is of order
	$$
	\begin{cases}
		\lcm\left(n_3,\ldots,n_k\right) \qquad \ \ \text{ if } k_2=0, \\
		\lcm\left({2,n_3,\ldots,n_k}\right) \qquad \text{otherwise.}
	\end{cases}
	$$
\end{corollary}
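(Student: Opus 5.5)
The idea is to realize each factor separately on a surface with one boundary component, then glue the pieces to a sphere with holes, on which the mapping class acts trivially. Homology then splits as a direct sum, so the characteristic polynomials multiply and the order of $f_*$ is the $\lcm$ of the orders of the pieces.

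\textbf{Factors $\varphi_{n_i}$.} For each $i\geq 3$, Theorem~\ref{theorem:classification_of_cyclotomic_pol_for_algebraically_finite_type}(1) and its proof give an algebraically finite type mapping class with characteristic polynomial $\varphi_{n_i}(x)$ and action of order $n_i$. The proof builds it with Lemma~\ref{lemma:cyclic_substitution_construction} from $h_p$, $h_4$ or $h_{pq}$. I would modify that construction slightly: in the central genus-zero surface $S_{0,m}$, the cyclic rotation permuting the boundary components has a fixed point (the center of the rotation). Removing an invariant disk around it gives an algebraically finite type class $F_i$ on a surface $\Sigma_i$ with one boundary component, with $\chi_{F_i}(x)=\varphi_{n_i}(x)$ and $(F_i)_*$ of order $n_i$. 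For $m=1$ we simply take $h_p$, $h_4$ or $h_{pq}$ directly. Removing a disk does not change $H_1$ of the genus part, so these facts persist.

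\textbf{Factors $(x\pm1)^2$.} For $(x-1)^2$, take the identity on $S_{1,1}$. For $(x+1)^2$, take the hyperelliptic involution of $S_{1,1}$, which acts as $-I$ and has order $2$. Its fixed point can be used to cut out the boundary disk.

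\textbf{Assembly.} Let $N$ be the total number of pieces, with $k_1$ copies of the identity piece, $k_2$ copies of the involution piece, and the pieces $\Sigma_i$. Glue the boundaries of all pieces to the $N$ boundary components of a sphere $S_{0,N}$. If $N=1$ no gluing is needed; if $N=2$ glue the two pieces directly. Define $f$ as the identity on $S_{0,N}$, after isotoping each piece's map to be the identity near its boundary. This is possible because each periodic piece fixes a point at the center of the removed disk and rotates near it, so a fractional twist in a collar makes it the identity on the boundary while keeping it of algebraically finite type: the added twist only lives in a non-amphidrome annulus. The boundary curves form a reduction system with periodic first-return maps, so $f$ is of algebraically finite type. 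Since these curves are separating, $H_1(S;\Q)$ is the direct sum of the $H_1$ of the pieces, and $f_*$ is block diagonal. Hence $\chi_f=p(x)$, and the order of $f_*$ is the $\lcm$ of the block orders. The identity blocks contribute $1$ and the involution blocks contribute $2$ exactly when $k_2>0$, which gives the stated formula.

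\textbf{Expected obstacle.} The main point needing care is the boundary behaviour. The first-return maps must remain periodic after modifying them to agree on the gluing curves, and the modification in the collar must not alter the homology action. I would handle this by noting that a Dehn twist about a boundary-parallel curve acts trivially on homology, and that Theorem~\ref{theorem:Nielsen_formula} lets such annuli be absorbed as non-amphidrome annuli in the reduction system.
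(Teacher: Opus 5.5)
Your proposal is correct and follows essentially the same route as the paper. Both arguments realize each factor by an algebraically finite type map with a fixed point, delete an invariant disk, and glue the resulting one-boundary pieces to a sphere with holes on which $f$ is the identity, absorbing the boundary rotations in collars; $\chi_f$ and the order of $f_*$ are then read off from the block-diagonal splitting of $H_1$.

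One point needs tightening: the reduction system for $f$ must consist of the gluing curves together with the internal reduction curves of the non-periodic pieces $F_i$ from the cyclic substitution construction, as in the paper. The first-return maps on those whole pieces are not periodic, so the gluing curves alone are not enough.
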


\begin{proof}
	The polynomials $(x-1)^{2k_1}$ and $(x+1)^{2k_2}$ are realized by the identity map $f_1$ on $\Sigma_1 = S_{k_1}$ and hyperelliptic involution $f_2$ on $\Sigma_2 = S_{k_2}$, respectively. By Theorem~\ref{theorem:classification_of_cyclotomic_pol_for_algebraically_finite_type} each polynomial $\varphi_{n_i}$, $i\geq3$, is realizable by some $f_i \in \Mod(\Sigma_i)$ of algebraically finite type such that $(f_i)_*$ is of order $n_i$. Moreover, the construction of each $f_i$ ensures that it has at least one fixed point.
	
	For every $i$ remove an invariant disc neighbourhood $D_i \subset \Sigma_i$ of a fixed point of $f_i$, resulting in a surface $\Sigma'_i$. Next, attach the boundary components of the surfaces $\Sigma'_i$ to the boundary components of a genus-zero surface $S_{0,k}$ along cylinders $Z_i \cong S^1\times [-1,1]$, obtaining a closed surface $S$. We define a mapping class $f$ of $S$ by taking $f_i|_{\Sigma'_i}$ on $\Sigma'_i$, the identity map on $S_{0,k}$, and on each cylinder $Z_i$ an isotopy between the actions on its end circles.
	
	The class $f$ is of algebraically finite type. For this it is sufficient to take a reduction system $C$ consisting of reduction systems for the maps $f_i$ on $\Sigma'_i$ making them algebraically finite, together with the core curves $S^1 \times \{0\}$ of the cylinders $Z_i$.
	
	Finally, $H_1(S;\Q)\cong \bigoplus_{i=1}^k H_1(\Sigma'_i;\Q) \cong \bigoplus_{i=1}^k H_1(\Sigma_i;\Q)$ because $S_{0,k}$ does not contribute to the first homology of $S$ and the inclusions $\Sigma'_i \hookrightarrow \Sigma_i$ induce isomorphisms on first homology groups. Since $f$ restricted to $\Sigma'_i$ is $f_i$ with the desired polynomial $\chi_{f_i}(x)$, we get $\chi_f(x) = p(x)$. It is clear that the order of $f_*$ is the least common multiple of orders of $(f_i)_*$.
\end{proof}


\section{A complete Casson--Bleiler-type criterion}

Recall the following criterion due to Casson--Bleiler (see \cite[Lemma 5.1]{Casson-Bleiler}), in the symplectically irreducible form used here (cf. \cite[Theorem 14.5]{Farb-Marg} and \cite{Margalit-Spallone}).

\begin{theorem}
	Let $f\in \Mod(S)$ and suppose that $\chi_f(x)$ satisfies each of the following conditions:
	\begin{itemize}
		\item[a)] it is symplectically irreducible over $\Z$,
		\item[b)] it is not a polynomial in $x^k$ for any $k > 1$;
		\item[c)] it is not a cyclotomic polynomial.
	\end{itemize}
	Then $f$ is pseudo-Anosov.
\end{theorem}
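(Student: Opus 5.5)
The plan is to argue by exclusion, using the Nielsen--Thurston trichotomy: $f$ is periodic, reducible, or pseudo-Anosov. It therefore suffices to rule out the first two possibilities under hypotheses a)--c). Here $S$ is understood to be closed of genus $g\geq 2$, as in the preceding lemmas.

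\emph{Reducible case.} Suppose $f$ is reducible. Hypothesis a) lets us apply Lemma~\ref{lemma:reducible_mapping_and_irreducible_polynomial_r(x^k)}, which gives $\chi_f(x)=r(x^k)$ for some symplectic $r$ and some $k>1$. This contradicts b). All the topological content sits in Lemma~\ref{lemma:orbits_for_reducible_mappings_with_irreducible_char_polynomial}: since every reduction curve is separating (Lemma~\ref{lemma:irreducible_polynomial--only_separating_curves}), the positive-genus pieces form a single cyclically permuted orbit, and the block-cyclic formula~\eqref{formula:char_pol_for_block_cyclic_matrix} then forces the substitution $x\mapsto x^k$. So nothing new is needed here.

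\emph{Periodic case.} Suppose $f$ is periodic of order $N$. Then $f_*^N=I$, so every root of $\chi_f$ is a root of unity. Hence $\chi_f$ is a product of cyclotomic polynomials. By a), the only symplectically irreducible products of this kind are $\varphi_n(x)$ with $n\geq 3$, $(x-1)^2$, and $(x+1)^2$.

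\begin{itemize}
\item $\varphi_n$ is excluded by c).
\item $(x-1)^2$ and $(x+1)^2$ have degree $2$, so they can occur only when $g=1$, which is outside the standing range.
\end{itemize}

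If genus one is allowed, these two polynomials still have to be ruled out. Both $(x+1)^2=x^2+2x+1$ and $(x-1)^2=x^2-2x+1$ are polynomials in $x$ with a nonzero $x$-coefficient, so b) does not exclude them directly. Instead I would note that in genus $1$ the word ``cyclotomic'' in c) should be read as ``product of cyclotomic polynomials''. Alternatively, observe that the Casson--Bleiler statement in~\cite{Farb-Marg} assumes $g\geq 2$ and excludes roots of unity, and restrict to $g\geq2$. I expect this bookkeeping of the degenerate polynomials $(x\pm1)^2$ to be the only delicate point. The main work is already done by the reducible-case lemmas.

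Combining the two cases, $f$ is neither periodic nor reducible, so $f$ is pseudo-Anosov.
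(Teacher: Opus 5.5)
Your argument is correct and follows the route the paper relies on. The paper cites Casson--Bleiler and Farb--Margalit for this statement, but Lemmas~\ref{lemma:irreducible_polynomial--only_separating_curves}, \ref{lemma:orbits_for_reducible_mappings_with_irreducible_char_polynomial} and~\ref{lemma:reducible_mapping_and_irreducible_polynomial_r(x^k)} isolate exactly your reducible-case step, and the proof of Theorem~\ref{theorem:classification_of_cyclotomic_pol_for_algebraically_finite_type}(3) runs the same periodic/reducible exclusion. Your worry about $(x\pm1)^2$ is justified, but in genus~$1$ they cannot be ruled out: the identity or a Dehn twist on the torus has $\chi_f(x)=(x-1)^2$, which satisfies a)--c), so the statement genuinely needs $g\geq 2$ (or c) read as ``no roots of unity''), as you propose.
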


Theorem~\ref{theorem:classification_of_cyclotomic_pol_for_algebraically_finite_type}(3) can be viewed as a cyclotomic analogue of the Casson--Bleiler criterion. 
In fact, both these criteria state that under the conditions a symplectic polynomial is \emph{only} realizable as $\chi_f(x)$ for pseudo-Anosov mapping classes. This observation leads to a complete characterization of such polynomials, which restates Theorem~\ref{theorem:intro_complete_criterion} from the introduction.

%

We will use the following standard realization fact in two closely related forms. Every symplectic polynomial of degree $2g$ is the characteristic polynomial of some element of $\Sp(2g,\Z)$, and hence, by the surjectivity of $\Psi$, of some mapping class in $\Mod(S_g)$; see, for instance, \cite{Margalit-Spallone,Yang}. We will also use the corresponding one-boundary-component version: every symplectic polynomial of degree $2g$ is realized by the homological action of some mapping class of $S_{g,1}$. Indeed, the capping map $\Mod(S_{g,1},\partial S_{g,1})\to \Mod(S_g)$ followed by the symplectic representation is still surjective, since Dehn twists about nonseparating curves in $S_{g,1}$ map to the elementary symplectic transvections, and these generate $\Sp(2g,\Z)$.

\begin{theorem}\label{theorem:complete_pseudo_anosov_criterion}
	Let $p(x)\in\Z[x]$ be a symplectic polynomial of degree $2g$. Then
	$$
	\text{ $p(x)$ is realizable as $\chi_f(x)$ only for pseudo-Anosov mapping classes $f\in\Mod(S_g)$ }
	$$
	if and only if $p(x)$ satisfies each of the following conditions:
	\begin{itemize}
		\item[(a)] it is symplectically irreducible over $\Z$,
		\item[(b)] it is not a polynomial in $x^k$ for any $k > 1$;
		\item[(c)] it is not a cyclotomic polynomial, or $p(x) = \varphi_n(x)$ where $n$ has at least three distinct prime factors.
	\end{itemize}
\end{theorem}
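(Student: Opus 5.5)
We prove the two directions separately. Sufficiency combines the Casson--Bleiler criterion with Theorem~\ref{theorem:classification_of_cyclotomic_pol_for_algebraically_finite_type}(3). Necessity requires, for each failed condition, an explicit non-pseudo-Anosov mapping class realizing $p(x)$.

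\emph{Sufficiency.} Assume (a), (b), (c) hold and let $\chi_f(x)=p(x)$. If $p$ is not cyclotomic, the Casson--Bleiler theorem stated above gives that $f$ is pseudo-Anosov. Otherwise $p(x)=\varphi_n(x)$ with at least three distinct prime factors. By (b), $\varphi_n$ is not of the form $r(x^k)$ with $k>1$, so by the remark after Lemma~\ref{lemma:reducible_mapping_and_irreducible_polynomial_r(x^k)} $n$ is square-free. Theorem~\ref{theorem:classification_of_cyclotomic_pol_for_algebraically_finite_type}(3) then says $f$ is pseudo-Anosov.

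\emph{Necessity.} Suppose one condition fails; we construct a non-pseudo-Anosov $f\in\Mod(S_g)$ with $\chi_f=p$.

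\begin{itemize}
\item[(a) fails.] Write $p=p_1p_2$ with $p_i$ symplectic of positive degrees $2g_1,2g_2$. By the one-boundary-component realization fact, choose $h_i\in\Mod(S_{g_i,1})$ with $\chi_{h_i}=p_i$. Glue the two surfaces along their boundaries (through an annulus), extending by the $h_i$ fixed on the boundary. The resulting $f$ preserves the separating gluing curve, hence is reducible. Mayer--Vietoris, exactly as in the proof of Corollary~\ref{corollary:realization_of_product_of_cyclotomic_polynomials}, gives $\chi_f=p_1p_2$.

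\item[(b) fails.] Write $p(x)=r(x^k)$ with $k>1$. First we must show $r$ can be taken symplectic. Reciprocity of $p$ gives $x^{2g}r(x^{-k})=r(x^k)$, so $\deg r=2g/k$ and $r$ is reciprocal and monic. The constant term of $r$ equals that of $p$, namely $1$. If $2g/k$ is odd, reciprocity with constant term $1$ forces $r(-1)=0$ (the roots pair up as $\lambda,\lambda^{-1}$ except for $-1$); we must handle this. Replacing $k$ by a divisor $k'$ so that $\deg r(x^{k/k'})$ is even, i.e.\ writing $p=\tilde r(x^{k'})$ with $\tilde r(x)=r(x^{k/k'})$, should fix it. For example, if $k$ is even take $k'=2$-power adjustments; if $k$ is odd then $2g/k$ is even automatically. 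So choose $k'$ with $k'\mid k$, $k'>1$, and $2g/k'$ even, which is possible since $2g$ is even (take $k'$ as the odd part of $k$ if nontrivial, else $k'=2^{j}$ with $2^{j}\mid g$). This parity bookkeeping is the step I expect to need the most care; in particular the case where $k$ is a power of $2$ with $2g/k$ odd requires choosing a smaller power of $2$. Once $r$ is symplectic of degree $2g/k$, realize it by $h\in\Mod(S_{g/k,1})$. Lemma~\ref{lemma:cyclic_substitution_construction} with $m=k$ then yields a reducible, hence non-pseudo-Anosov, $F\in\Mod(S_g)$ with $\chi_F=p$.

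\item[(c) fails.] Here $p=\varphi_n$ with $n\ge3$ having at most two prime factors, so Theorem~\ref{theorem:classification_of_cyclotomic_pol_for_algebraically_finite_type}(1) provides an algebraically finite type realization, which is not pseudo-Anosov. Note that the cases $(x\pm1)^2$ are already covered because they are reducible cyclotomic products, failing (a) when $g\ge2$; in genus $1$ they are realized by the identity or the hyperelliptic involution.
\end{itemize}

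In all three cases $p$ is realized by a non-pseudo-Anosov mapping class, which completes the plan.
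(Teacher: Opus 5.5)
\textbf{Gap in the case where (b) fails.} Your sufficiency argument and your handling of the cases where (a) or (c) fails agree with the paper. The problem is the (b) case. You claim one can always choose $k'\mid k$ with $k'>1$ and $2g/k'$ even, and this is false. Let $K$ be the gcd of the exponents occurring in $p$. Then $p$ is a polynomial in $x^{k'}$ exactly when $k'\mid K$, and $K\mid 2g$. So a suitable $k'$ exists if and only if $\gcd(K,g)>1$. This fails precisely when $K=2$ and $g$ is odd.

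For example, take $p(x)=x^6+4x^4+4x^2+1$ with $g=3$. It is a polynomial only in $x^2$, and $r(y)=y^3+4y^2+4y+1$ has odd degree. So $r$ is not symplectic, and there is no surface $S_{g/2,1}$ on which to realize it. Your ``smaller power of $2$'' would have to be $k'=1$. As written, your construction therefore does not cover every polynomial failing (b).

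\textbf{How to close it.} You do not need to realize such $p$ through (b) at all. Since you prove necessity case by case, in the (b) case you may assume (a) and (c) hold, and this is what the paper does. Suppose $\deg r$ is odd; since $k\deg r=2g$, this forces $k$ to be even. Write $r(y)=(y+1)s(y)$ with $s$ reciprocal and $s(0)=1$, so that $p(x)=(x^k+1)\,s(x^k)$.
\begin{itemize}
\item If $s$ is nonconstant, this is a factorization into two symplectic polynomials, contradicting (a). In the example above, $p=(x^2+1)(x^4+3x^2+1)$ indeed fails (a).
\item If $s=1$, then $p=x^k+1$. Symplectic irreducibility forces a single cyclotomic factor, so $k$ is a power of $2$ and $p=\varphi_{2k}$, contradicting (c).
\end{itemize}
Hence $\deg r$ is even for whatever $k$ you started with, and your divisor-reduction step becomes unnecessary. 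Where that step does apply, it is a valid alternative that avoids invoking (a) and (c).

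\textbf{Side remark on $(x\pm1)^2$.} Your comment on these polynomials is misplaced. They are not cyclotomic polynomials, so they do not fail (c). Having degree $2$, they occur only in genus $1$. There they satisfy (a)--(c), yet they are realized by $\pm I$. So they are not ``covered'' by the necessity argument. Rather, they show that the statement, like the quoted form of Casson--Bleiler, needs $g\geq 2$, which the paper assumes implicitly.
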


Observe that if $p(x)=\varphi_n(x)$, then $(b)$ implies that $n$ is square-free.

\begin{proof}
First suppose that $p(x)$ satisfies $(a)$--$(c)$ and that $f\in\Mod(S_g)$ has $\chi_f(x)=p(x)$. If $p(x)$ is not cyclotomic, then the Casson--Bleiler criterion applies and $f$ is pseudo-Anosov. If $p(x)=\varphi_n(x)$ is cyclotomic, then $(b)$ implies that $n$ is square-free, and $(c)$ implies that $n$ has at least three distinct prime divisors. Theorem~\ref{theorem:classification_of_cyclotomic_pol_for_algebraically_finite_type}(3) then implies that $f$ is pseudo-Anosov.

It remains to show the converse. Thus suppose that at least one of $(a)$--$(c)$ fails. We show in each case that $p(x)$ is realized by a mapping class which is not pseudo-Anosov.

First assume that $(a)$ fails. Then $p(x)$ is symplectically reducible, so
\[
p(x)=p_1(x)p_2(x)
\]
for symplectic polynomials $p_i(x)$ of degrees $2g_i$, with $g_1+g_2=g$. Cut $S_g$ along a separating curve, obtaining two surfaces $S_{g_1,1}$ and $S_{g_2,1}$. By the one-boundary-component realization fact above, choose mapping classes $f_i\in\Mod(S_{g_i,1},\partial S_{g_i,1})$ such that $\chi_{f_i}(x)=p_i(x)$. Gluing the two surfaces along their boundary components and using $f_1$ and $f_2$ on the two sides gives a reducible mapping class $f\in\Mod(S_g)$ with
\[
\chi_f(x)=\chi_{f_1}(x)\chi_{f_2}(x)=p_1(x)p_2(x)=p(x).
\]
Hence $p(x)$ is not realized only by pseudo-Anosov mapping classes.

Next assume that $(c)$ fails. Then $p(x)=\varphi_n(x)$ for some $n$ with at most two distinct prime factors. By Theorem~\ref{theorem:classification_of_cyclotomic_pol_for_algebraically_finite_type}, this polynomial is realized by a mapping class of algebraically finite type. Such a mapping class is periodic or reducible and has no pseudo-Anosov component, so it is not pseudo-Anosov.

It remains to consider the case in which $(a)$ and $(c)$ hold but $(b)$ fails. Write
\[
p(x)=r(x^k)
\]
for some $k>1$. Since $p(x)$ is reciprocal, $r(x)$ is reciprocal as a polynomial in its own variable. We claim that $r(x)$ has even degree in the present case. If $\deg r$ were odd, then $k$ would be even, because $\deg p=2g$. Every reciprocal polynomial of odd degree has $-1$ as a root, so $r(y)=(y+1)s(y)$ with $s(y)$ reciprocal. Hence
\[
p(x)=(x^k+1)s(x^k).
\]
If $s$ is nonconstant, this is a factorization of $p(x)$ into two nonconstant symplectic polynomials, contradicting $(a)$. If $s=1$, then $p(x)=x^k+1$; symplectic irreducibility then forces $x^k+1$ to have a single cyclotomic factor, so $k$ is a power of two and $p(x)=\varphi_{2k}(x)$, one of the cyclotomic exceptions excluded by $(c)$. Thus $\deg r$ is even. Write $\deg r=2g'$, so that $g=kg'$. Then $r(x)$ is a symplectic polynomial. Choose a mapping class $h$ of $S_{g',1}$ with $\chi_h(x)=r(x)$, using the one-boundary-component realization fact. Applying Lemma~\ref{lemma:cyclic_substitution_construction} with $m=k$ gives a reducible mapping class $F\in\Mod(S_g)$ satisfying
\[
\chi_F(x)=r(x^k)=p(x).
\]
Again $p(x)$ is realized by a non-pseudo-Anosov mapping class. This proves the converse.
\end{proof}

\section{Maximal finite order in \texorpdfstring{$\Psi(\AFT(S_g))$}{Psi(AFT(Sg))}}

A classical theorem of Wiman from 1895, together with the Nielsen realization theorem, implies that the maximal order of a periodic mapping class on a closed orientable surface $S_g$ is $4g+2$ (see~\cite[Theorems~7.1 and~7.5]{Farb-Marg}). Although non-periodic mapping classes of algebraically finite type have infinite order by definition, it is still natural to ask how large the finite order of the symplectic transformation induced on first homology can be.

B\"{u}rgisser \cite[Corollary 2]{Burgisser} gave the following simple characterization of the finite orders of elements in $Sp(2g,\Z)$. Let $d= \prod_{i=1}^k p_i^{a_i}$ be the prime factorization of $d$, $p_i < p_{i+1}$ and $a_i \geq 1$. Define
$$
w(d) := \begin{cases}
	\sum_{i=2}^k \varphi(p_i^{a_i}) & \text{if } p_1^{a_1}=2, \\
	\sum_{i=1}^k \varphi(p_i^{a_i}) & \text{otherwise.}
\end{cases}
$$
Then $d$ is the order of an element of $\Sp(2g,\Z)$ if and only if 
$$
w(d) \leq 2g.
$$

Let $\mathcal A(g)$ denote the maximal finite order of a matrix in  $\Psi(\AFT(S_g)) \subset \Sp(2g,\Z)$.

\begin{theorem}\label{theorem:realization_of_orders_in_AFT_and_A(g)}
	If $d$ is the order of an element of $\Sp(2g,\Z)$, then there exists $f\in \Mod(S_g)$ of algebraically finite type such that the order of $f_*$ is equal to $d$. In particular, for every $g$ the number $\mathcal A(g)$ is equal to the maximal finite order of an element in $\Sp(2g,\Z)$.
\end{theorem}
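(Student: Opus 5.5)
We plan to use Corollary~\ref{corollary:realization_of_product_of_cyclotomic_polynomials} directly: from B\"urgisser's condition $w(d)\le 2g$ we will build a product of cyclotomic polynomials of degree exactly $2g$. Every factor will have at most one prime divisor, and the order of the resulting matrix will be $d$. Write $d=\prod_{i=1}^k p_i^{a_i}$.

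First we handle the generic case, in which $p_1^{a_1}\neq 2$. We take the polynomials $\varphi_{p_i^{a_i}}(x)$ for all $i$ with $p_i^{a_i}\geq 3$. Each has degree $\varphi(p_i^{a_i})$. Each is even-degree and symplectic, and it has one prime factor, so Theorem~\ref{theorem:classification_of_cyclotomic_pol_for_algebraically_finite_type} applies to it. The only possible exception is the factor $p_1^{a_1}=2$, which is excluded here. The total degree is $w(d)\le 2g$, and $w(d)$ is even. The deficit $2g-w(d)$ is therefore even, and we fill it with a factor $(x-1)^{2k_1}$ with $k_1=(2g-w(d))/2$. Corollary~\ref{corollary:realization_of_product_of_cyclotomic_polynomials} with $k_2=0$ then gives $f\in\AFT(S_g)$ whose $f_*$ has order $\lcm(p_i^{a_i})=d$.

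Next we handle the case $p_1^{a_1}=2$, so $d=2d'$ with $d'$ odd. We use the same factors $\varphi_{p_i^{a_i}}$ for $i\geq 2$, with total degree $w(d)\le 2g$. If $w(d)<2g$, we add $(x+1)^2$ and pad the rest with $(x-1)^{2k_1}$. The order is then $\lcm(2,d')=d$ by the second case of the corollary.

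If instead $w(d)=2g$, there is no room for $(x+1)^2$. Here we replace some $\varphi_{p_i^{a_i}}$ with $p_i$ odd by $\varphi_{2p_i^{a_i}}$. This is allowed because $\varphi(2p^a)=\varphi(p^a)$ and $2p^a$ has two prime factors, so the corollary still applies. The order becomes $\lcm(2p_i^{a_i},\ldots)=d$. This needs some odd prime to be present, that is $k\ge2$. When $d=2$ we simply use $(x+1)^{2g}$, namely the hyperelliptic involution, which is valid for $g\ge1$.

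Finally, for the statement about $\mathcal A(g)$, note that every matrix in $\Psi(\AFT(S_g))$ lies in $\Sp(2g,\Z)$, so $\mathcal A(g)$ is at most the maximal finite order in $\Sp(2g,\Z)$. The construction above gives the reverse inequality.

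The main obstacle is the degree bookkeeping in the case $p_1^{a_1}=2$ with $w(d)=2g$. The doubling trick $\varphi_{p^a}\mapsto\varphi_{2p^a}$ resolves it without increasing the degree, and we must check that the corollary accepts the two-prime index $2p^a$, which Theorem~\ref{theorem:classification_of_cyclotomic_pol_for_algebraically_finite_type}(1) guarantees. A secondary point is confirming that the corollary's hypothesis of fixed points for each piece holds for the factors $(x\pm1)^{2k}$. This follows since the identity and the hyperelliptic involution both have fixed points.
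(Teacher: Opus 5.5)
Your proof is correct and follows the paper's argument. Both feed a product of prime-power cyclotomic factors, padded by $(x-1)^{2g-w(d)}$, into Corollary~\ref{corollary:realization_of_product_of_cyclotomic_polynomials}, and both handle $d=2d'$ with $d'$ odd by replacing one $\varphi_{p^a}$ with $\varphi_{2p^a}$. The only difference is that the paper applies this doubling uniformly whenever $p_1^{a_1}=2$ and $d\geq 3$, so your separate $(x+1)^2$ subcase for $w(d)<2g$ is valid but unnecessary.
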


\begin{proof}
	Let $d = \prod_{i=1}^k p_i^{a_i}$ be the prime factorization as above. The case $d=2$ is covered by the hyperelliptic involution of $S_g$. Assume $d\geq 3$. If $p_1^{a_1}=2$, then $k\geq 2$. Consider the polynomial
	\begin{align*}
		p(x) = (x-1)^{2g-w(d)} \cdot \begin{cases}
			\varphi_{2p_2^{a_2}}(x) \displaystyle\prod_{i=3}^k \varphi_{p_i^{a_i}}(x) \ &\text{ if } \ p_1^{a_1} =2 \\
			\displaystyle\prod_{i=1}^k \varphi_{p_i^{a_i}}(x) \ &\text{ otherwise.}
		\end{cases}
	\end{align*}
	Since the degree of $\varphi_{2p_2^{a_2}}(x)$ is $\varphi(p_2^{a_2})$, the degree of $p(x)$ is $2g$ and the exponent $2g-w(d)$ is even. Hence by Corollary~\ref{corollary:realization_of_product_of_cyclotomic_polynomials} there exists $f\in \Mod(S_g)$ such that $\chi_f(x) = p(x)$ and $f_*$ has finite order~$d$.	
\end{proof}

\begin{remark}
Let us comment on the asymptotic behavior of $\mathcal A(g)$. In the exposition paper \cite{Kuzmanovich-Pavlichenkov} of Kuzmanovich and Pavlichenkov they study, among other things, the asymptotic growth rate of the sequence $(\mathcal H(n))_n$, where $\mathcal H(n)$ is the maximal finite order of a matrix in $GL(n,\Z)$. Since finite orders of elements are the same in $\Sp(2g,\Z)$ and $GL(2g,\Z)$ by the result of B\"{u}rgisser \cite[Corollary~1]{Burgisser}, Theorem~\ref{theorem:realization_of_orders_in_AFT_and_A(g)} implies $\mathcal A(g) = \mathcal H(2g)$. It turns out, by a theorem of Levitt and Nicolas \cite{Levitt-Nicolas} also discussed in \cite[Theorem 3.10]{Kuzmanovich-Pavlichenkov}, that the logarithm of $\mathcal H(n)$ has the same asymptotic growth as the logarithm of Landau's function $\mathcal L(n)$ defined as the largest order of an element in the symmetric group of degree $n$. Namely,
$$
\lim_{n\to \infty} \frac{\ln \mathcal L(n)} {\sqrt{n\ln n}} =1
\qquad \text{ and } \qquad
\lim_{n\to \infty} \frac{\ln \mathcal H(n)} {\sqrt{n\ln n}} =1.
$$
This implies 
$$
\lim_{g\to \infty} \frac{\ln \mathcal A(g)} {\sqrt{2g\ln g}} =1,
$$
which can be stated as
$$
\mathcal A(g)=\exp\left((1+o(1))\sqrt{2g\ln g}\right),
$$
where $o$ is the little-o notation, that is, $o(1)$ denotes a function tending to zero as $g\to\infty$.

Thus, in contrast to the linear bound for the orders of periodic mappings, $\mathcal A(g)$ grows faster than any polynomial in $g$ and is exponential in $\sqrt{2g\ln g}$.
\end{remark}

\begin{remark}
	In \cite{GMMM} Graff et al. realize some invariants of periodic points related to Lefschetz numbers by mapping classes of algebraically finite type, motivated by the study of the dynamics of Morse--Smale diffeomorphisms. Instead of irreducible cyclotomic factors, they use polynomials of the form $(x^n-1)^2$ and permutation matrices for cycles as building blocks, see \cite[Theorem~3.1~and~4.7]{GMMM}. In this way, one can realize any order of an element in the symmetric group of degree $g$ as the order of an element in $\Psi(\AFT(S_g)) \subset \Sp(2g,\Z)$. In particular, the maximal possible order obtained using this approach is equal to the value $\mathcal L(g)$ of Landau's function, not to $\mathcal H(2g) = \mathcal A(g)$. 
\end{remark}

\section*{Acknowledgements}
The authors thank Dan Margalit for several helpful comments.


\end{document}